\newcommand{\comment}[1]{}
\newtheorem{Thm}{Theorem}[section]
\newtheorem{Lem}{Lemma}[section]
\newtheorem{Prop}{Proposition}[section]
\newtheorem{Proper}{Property}[section]
\newtheorem{Coro}{Corollary}[section]
\newtheorem{Algo}{Algorithm}
\theoremstyle{definition}
\newtheorem{Rem}{Remark}[section]
\newtheorem{Def}{Definition}[section]
\newtheorem{Assum}{Assumption}
\newtheorem{Ques}{Question}[section]
\newtheorem{Example}{Example}[section]
\numberwithin{equation}{section}
\newtheorem{Thm}{Theorem}
\newtheorem{Lem}{Lemma}
\newtheorem{Prop}{Proposition}
\newtheorem{Coro}{Corollary}
\theoremstyle{definition}
\newtheorem{Rem}{Remark}
\newtheorem{Def}{Definition}
\newtheorem{Example}{Example}
\newcommand{\ind}{{\bf 1}}
\newcommand{\proba}{\,\mathbb P}
\newcommand{\esp}{\,{\mathbb E}}
\newcommand{\supp}{{\rm{supp}}}
\newcommand{\sign}{{\rm{sign}}}
\newcommand{\defe}{\mathrel{\mathop:}=}
\newcommand{\defd}{\eqd}
\newcommand{\calE}{{\cal E}}
\newcommand{\filF}{{\cal F}}
\newcommand{\calN}{{\cal N}}
\newcommand{\calR}{{\cal R}}
\newcommand{\calW}{{\cal W}}
\def\indt#1{\{#1_t\}_{t\in T}}
\def\indtr#1{\{#1_t\}_{t\in\mathbb R}}
\def\indtrn#1{\{#1_t\}_{t\in\mathbb R^n}}
\newcommand{\csspan}{\overline{{\rm{span}}}}
\newcommand{\cmsspan}{\overline{\vee\mbox{-}{\rm{span}}}}
\newcommand{\psspan}{{\rm{span}}_+}
\newcommand{\cpsspan}{\overline{{\rm{span}}_+}}
\def\laps{{L^\alpha_+(S,\mu)}}
\def\las{{L^\alpha(S,\mu)}}
\def\lapdo{{L^\alpha_+(S_1,\mu_1)}}
\def\lapim{{L^\alpha_+(S_2,\mu_2)}}
\def\lap{L^\alpha_+}
\def\la{L^\alpha}
\newcommand{\eqnh}{\begin{eqnarray*}}
\newcommand{\eqne}{\end{eqnarray*}}
\newcommand{\eqnhn}{\begin{eqnarray}}
\newcommand{\eqnen}{\end{eqnarray}}
\newcommand{\equh}{\begin{equation}}
\newcommand{\eque}{\end{equation}}
\newcommand{\sumin}{\sum_{i=1}^n}
\def\bveejp{\bigvee_{j=1}^p}
\newcommand{\eqd}{\stackrel{\rm d}{=}}
\newcommand{\widebar}{\overline}
\newcommand{\eintt}{\ \int^{\!\!\!\!\!\!\!e}}
\newcommand{\Eintt}{\int^{\!\!\!\!\!\!\!e}}
\def\topp#1{^{(#1)}}
\def\lap{L^\alpha_+}
\def\ae{\mbox{-a.e.}}
\def\inftydots#1{#1_1,#1_2,\dots}
\def\mrho{\rho_{\mu,\alpha}}
\def\eratios#1{\calR_{e,+}({#1})}
\def\eratiosf{\calR_{e,+}(\filF)}
\def\nn#1{\left\|#1\right\|}
\def\bnn#1{\Big\|#1\Big\|}
\def\bccbb#1{\Big\{#1\Big\}}
\def\mam{M_{\alpha,\vee}}
\def\mas{M_{\alpha,+}}
\def\d{{\rm d}}
\def\newpara#1{\noindent{\bf#1}\\}
\def\qmand{\quad\mbox{ and }\quad}
\def\mmon{\mbox{ on }}
\def\adaptF#1{\{#1_t,\filF_t:0\leq t<\infty\}}
\def\itemnumber#1{\noindent\parbox{0.23in}{$($#1$)$}}
\journal{Statistics and Probability Letters}
\begin{document}
\begin{frontmatter}

\title{On the Association of Sum-- and Max-- Stable Processes}

\author[um]{Yizao Wang\corref{cor1}}
\ead{yizwang@umich.edu}
\author[um]{Stilian A. Stoev}
\ead{sstoev@umich.edu}
\cortext[cor1]{Corresponding author.}
\address[um]{Department of Statistics, University of Michigan, 439 W.\ Hall, 1085 S.\ University, Ann Arbor,
  MI, 48109--1107}

\begin{abstract}
We address the notion of association of sum-- and max-- stable processes from the perspective of linear and max--linear isometries. 
We establish the appealing results that these two classes of isometries can be identified on a proper space (the extended positive ratio space). This
yields a natural way to associate to any max--stable process a sum--stable process. By using this association, we establish connections between
structural and classification results for sum-- and max-- stable processes.
\end{abstract}
\begin{keyword}
sum--stable\sep max--stable \sep classification \sep max--linear isometry \sep spectral representation
\MSC {60G52} \sep {60G70}
\end{keyword}




\end{frontmatter}

\section{Introduction}
Sum--stable processes and max--stable processes are two classes of stochastic processes, which have been investigated for a long time. For sum--stable processes, many results are available about their structure and representations as well as their ergodic properties (see e.g.~\citet{samorodnitsky94stable},~\citet{rosinski95structure},~\citet{rosinski00decomposition} and~\citet{samorodnitsky05null}).
At the same time, the max--stable processes have been relatively less explored from this perspective. However, several recent results imply close connection between the two classes of processes (see e.g.~\citet{dehaan84spectral},~\citet{stoev06extremal},~\citet{stoev08ergodicity} and~\cite{kabluchko08spectral}). 

In this paper, we address the problem of relating these two classes of processes in terms of their spectral representations.
We want to point out that a similar treatment of the association was recently proposed by \cite{kabluchko08spectral}. There 
the author associated the two classes of processes via their spectral measures. His approach and ours, although 
different, lead to the same association. The two approaches together complete the picture of the associations of the 
sum-- and max--stable processes. 
\medskip\\
We start by reviewing the sum-- and max--stable distributions and we will observe strong similarities between the two worlds. The understanding of these similarities is our main motivation for this work. A random variable $X$ is said to have {\it sum--stable} distribution, if for any $a,b\in\mathbb R$, there exists $c>0$ and $d\in\mathbb R$ such that
\[
aX_1 + bX_2 \eqd cX + d\,,
\]
where $X_1$ and $X_2$ are independent copies of $X$. On the other hand, a random variable $Y$ is said to have {\it max--stable} distribution, if for any $a,b>0$, there exists $c>0$ and $d\in\mathbb R$ such that
\[
aY_1 \vee bY_2\equiv \max(aY_1,bY_2)  \eqd cY + d\,,
\]
where $Y_1$ and $Y_2$ are independent copies of $Y$. For simplicity, in this paper we will concentrate on \textit{symmetric $\alpha$--stable} (S$\alpha$S) distributions and \textit{$\alpha$--Fr\'echet} distributions. The S$\alpha$S distribution is a specific sum--stable distribution with characteristic function
\[
\esp\exp\{-itX\} = \exp\{-\sigma^\alpha|t|^\alpha\}\,,\forall t\in\mathbb R\,.
\]
The sum--stability requires that $\alpha\in(0,2]$.
The $\alpha$--Fr\'echet distribution is a specific max--stable distribution such that
\[
\proba(Y\leq y) = \exp\{-\sigma^\alpha y^{-\alpha}\}\,,\forall y\in(0,\infty)\,.
\]
Here $\alpha$ is in $(0,\infty)$. Both $\sigma$'s above are positive and are referred to as the \textit{scale coefficient}.

More similarities can be observed between the \textit{S$\alpha$S} and \textit{$\alpha$--Fr\'echet processes}. An S$\alpha$S process $\indt X$ is a stochastic process, such that any finite linear combination (in form of $\sum_{i=1}^na_iX_{t_i}\,,a_i\in\mathbb R,t_i\in T, n\in\mathbb N$) is S$\alpha$S. Any S$\alpha$S process has integral representation with the form
\equh\label{rep:integralRep}
\indt X \eqd \left\{\int_S f_t(s)d \mas(s)\right\}_{t\in T}\,.
\eque
Here $\indt f\subset\la(S,\mu)$, `$\int$' stands for the \textit{stable integral} and $\mas$ is a {\it S$\alpha$S random measure} on measure space $(S,\mu)$ with control measure $\mu$ (see Chapters 3 and 13 in~\citet{samorodnitsky94stable}). At the same time, an $\alpha$--Fr\'echet process is a stochastic process, such that any finite max--linear combination (in form of $\bigvee_{i=1}^na_iY_{t_i}\,,a_i\geq 0,t_i\in T,n\in\mathbb N$) is $\alpha$--Fr\'echet. Such processes have extremal integral representations of the form
\equh\label{rep:extremalRep}
\indt Y \eqd \left\{\Eintt_S f_t(s)d \mam(s)\right\}_{t\in T}\,.
\eque
Here $\indt f\subset\lap(S,\mu)\defe\{f\in\la(S,\mu):f\geq 0\}$, `$\eintt\ \ $' stands for the \textit{extremal integral} and $\mam$ is an \textit{$\alpha$--Fr\'echet random sup--measure} with control measure $\mu$ (see~\citet{stoev06extremal}). 
The functions $\indt f$ in~\eqref{rep:integralRep} and~\eqref{rep:extremalRep} are called the \textit{spectral functions} of the sum-- or max--stable processes, respectively.
In this paper, $T$ denotes an arbitrary index set, which is sometimes equipped with a measure $\lambda$. Two common settings are $T = \mathbb Z$ with $\lambda$ being the counting measure and $T = \mathbb R$ with $\lambda$ being the Lebesgue measure. Brief summaries of useful properties of stable and extremal integrals are given in Section~\ref{sec:prelim}.

The representation~\eqref{rep:integralRep} implies that
\equh\label{eq:3}
\esp\exp\bccbb{-i\sum_{j=1}^na_jX_{t_j}} = \exp\bccbb{-\int_S\Big|\sum_{j=1}^na_jf_{t_j}(s)\Big|^\alpha d\mu(s)}\,,\quad a_j\in \mathbb R\,,t_j\in T\,,n\in\mathbb N\,,
\eque
which determines the finite--dimensional distributions (f.d.d.) of the S$\alpha$S process $\indt X$. The f.d.d.\ of the $\alpha$--Fr\'echet process $\indt Y$ in~\eqref{rep:extremalRep}, on the other hand, are expressed as:
\equh\label{eq:4}
\proba(Y_{t_1}\leq a_1,\dots,Y_{t_n}\leq a_n) = \exp\bccbb{-\int_S\Big(\bigvee_{j=1}^nf_{t_j}(s)/a_j\Big)^\alpha d\mu(s)}\,,\quad a_j\geq 0\,,t_j\in T\,,n\in\mathbb N\,.
\eque
Note that the r.h.s.\ of~\eqref{eq:3} and~\eqref{eq:4} are similar. Indeed, they both involve exponentials of either linear ($\sum$) or max--linear ($\bigvee$) combinations of spectral functions. The characterizations~\eqref{eq:3} and~\eqref{eq:4} and their close connections play an important role throughout this paper.

Based on these two similar representations, many analogous results have been obtained for sum--stable and max--stable processes, independently.
For example, in the seminal work~\citet{rosinski95structure}, Rosi\'nski established the \textit{conservative--dissipative decomposition} for stationary S$\alpha$S process $\indt X$. This decomposition can be written as
\equh\label{eq:5}
\indt X \eqd\{X^C_t+X^D_t\}_{t\in T}\,.
\eque
Here, we consider $T = \mathbb R$ or $\mathbb Z$, and the two components $\indt {X^C}$ and $\indt {X^D}$ are stochastically independent and arise from 
the \textit{flow} structure induced by the \textit{spectral functions} $\indt f$ of $\indt X$. (As we do not need any specific properties of flows in 
this paper, we refer the readers to~\citet{aaronson97introduction} and~\citet{krengel85ergodic}.)
Recently, an analogous decomposition for max--stable processes has been developed in~\citet{wang09structure}. That is, any measurable 
stationary $\alpha$--Fr\'echet process $\indt Y$, has the decomposition
\equh\label{eq:6}
\indt Y \eqd \{Y^C_t\vee Y^D_t\}_{t\in T}\,,
\eque
where the components $\{Y^C_t\}_{t\in T}$ and $\{Y^D_t\}_{t\in T}$ are independent and also arise from certain types of flows.
It turns out that the corresponding components in the decompositions~\eqref{eq:5} and~\eqref{eq:6} are very similar.
For example, $\indt{X^D}$ is a mixed moving average process while $\indt {Y^D}$ is a mixed moving maxima process.  This and other
existing analogies motivated us to explore the structural relationship between sum-- and max--stable processes. 
In particular, while studying the max--stable processes, can we benefit from the known results for sum--stable processes? Is there any easy
way to `translate' results on S$\alpha$S processes to $\alpha$--Fr\'echet processes (or vice versa)?   We provide partial answers to these
questions in terms of the spectral representations of the sum-- and max--stable processes.  The following remark provides some important intuition.

\begin{Rem}\label{rem:1}
Any S$\alpha$S ($\alpha$--Fr\'echet resp.) process has many different representations in form of~\eqref{rep:integralRep} (\eqref{rep:extremalRep} resp.)..
All the representations for the same process can be related by linear (max--linear, to be defined in Section~\ref{sec:prelim}, resp.) isometries. Let us take S$\alpha$S processes for example. Namely, if $\indt {f\topp1}\subset\la(S_1,\mu_1)$ and $\indt {f\topp2}$ are two spectral functions for the same S$\alpha$S process $\indt X$, then 
\equh\label{eq:relating}
Uf_t\topp1\defe f_t\topp2\,,\forall t\in T
\eque 
defines a linear isometry between subspaces of $\la(S_1,\mu_1)$ and $\la(S_2,\mu_2)$ (generated by the spectral functions $\indt{f\topp 1}$ and $\indt{f\topp 2}$, see Section~\ref{sec:EPRS}). The fact, that $U$ is a linear isometry, follows from the characterization~\eqref{eq:3}, which implies 
\[
\bnn{\sum_{i=1}^na_if_{t_i}\topp1}_{\la(S_1,\mu_1)} = \bnn{\sum_{i=1}^na_iUf_{t_i}\topp1}_{\la(S_2,\mu_2)} \,,\forall a_i\in\mathbb R,t_i\in T,n\in\mathbb N\,.
\]
Similarly, because of~\eqref{eq:4}, any two spectral representations of the same $\alpha$--Fr\'echet process can be related through a {\it max--linear isometry}.

\end{Rem}
The fact that different spectral representations are related by linear (max--linear resp.) isometries implies that, roughly speaking,
all structural results and classifications of sum-- and max--stable processes based on spectral representations
must be invariant w.r.t.\ the linear (max--linear resp.) isometries.
Remark~\ref{rem:1} suggests that the isometries play an important role in the study of these processes. In fact, we will establish the following surprising result: the positive--linear and max--linear isometries are identical on the so--called \textit{extended positive ratio space} (Theorem~\ref{thm:1})..
This result enables us to associate S$\alpha$S processes and $\alpha$--Fr\'echet processes with the same spectral functions (Theorem~\ref{thm:association}). This association will serve as a tool to translate available structural results about S$\alpha$S processes to the domain of $\alpha$--Fr\'echet processes.
However, we will also observe that there are S$\alpha$S processes that cannot be associated to any $\alpha$--Fr\'echet processes (Theorem~\ref{thm:associable}). We provide a practical characterization of the {\it max--associable} S$\alpha$S processes $\indt X$ with stationary increments characterized by dissipative flow, indexed by $T= \mathbb R$ or $T=\mathbb Z$ (Proposition~\ref{prop:4}).
\smallskip\\
{\bf The paper is organized as follows.} In Section~\ref{sec:prelim}, we review some basic properties of stable and extremal stochastic integrals as well as the notions of positive--linear and max--linear isometries. In Section~\ref{sec:EPRS}, we identify the positive--linear and max--linear isometries on the \textit{extended positive ratio space}. In Section~\ref{sec:association}, we establish the association of S$\alpha$S and $\alpha$--Fr\'echet processes and provide examples of both max--associable and non max--associable S$\alpha$S processes. In Section~\ref{sec:classification}, we summarize some known classification results for S$\alpha$S and $\alpha$--Fr\'echet process, which can be related by the association method. In Section~\ref{sec:discussion}, we conclude with a short discussion on the comparison between~\citet{kabluchko08spectral} and our approach.

\section{Preliminaries}\label{sec:prelim}
Here, we briefly review the properties of representations~\eqref{rep:integralRep} and~\eqref{rep:extremalRep} for S$\alpha$S and $\alpha$--Fr\'echet processes, respectively. For more details, see e.g.~\citet{samorodnitsky94stable} and~\citet{stoev06extremal}.\medskip

\noindent{\bf Symmetric $\alpha$--stable (S$\alpha$S) integrals}\smallskip

\itemnumber i (S$\alpha$S) The stable integral $Z\defe\int_Sf(s)d\mas(s)$ is well defined for all $f\in\la(S,\mu)$, $\alpha\in(0,2]$. It is an S$\alpha$S random variable with \textit{scale coefficient}
\[
\nn{Z}_\alpha = \left(\int_S\left|f(s)\right|^\alpha\mu(ds)\right)^{1/\alpha} = \nn f_{\la(S,\mu)}\,.
\]
\itemnumber {ii} (Independently scattered) For any $f,g\in\la(S,\mu), \alpha\in(0,2)$, $\int_Sfd\mas$ and $\int_Sgd\mas$ are independent, if and only if $fg = 0\,,\mu\ae$, i.e., $f$ and $g$ have disjoint supports.

\itemnumber {iii} (Linearity) For any $f,g\in\la(S,\mu)$, $a,b\in\mathbb R$, $\alpha\in(0,2]$, we have
\[
\Eintt_S(af(s) + bg(s))d\mas(s) = a\Eintt_Sf(s)d\mas(s) + b\Eintt_Sg(s)d\mas(s)\,,\mbox{a.e.}.
\]
\noindent{\bf Extremal integrals}\smallskip

\itemnumber i ($\alpha$--Fr\'echet) The extremal integral $Z\defe\eintt_Sf(s)d\mam(s)$ is well defined for all $f\in\lap(S,\mu)$, $\alpha\in(0,\infty)$. It is an $\alpha$--Fr\'echet random variable with \textit{scale coefficient}
\[
\nn{Z}_\alpha = \left(\int_Sf^\alpha(s)\mu(ds)\right)^{1/\alpha} = \nn f_{\lap(S,\mu)}\,.
\]
\itemnumber {ii} (Independently scattered) For any $f,g\in\lap(S,\mu), \alpha\in(0,\infty)$, $\Eintt_Sfd\mam$ and $\Eintt_Sgd\mam$, are independent, if and only if $fg = 0\,,\mu\ae$, i.e., $f$ and $g$ have disjoint supports.

\itemnumber {iii} (Max--linearity) For any $f,g\in\lap(S,\mu)$, $a,b>0$, $\alpha\in(0,\infty)$, we have
\[
\Eintt_S(af(s)\vee bg(s))d\mam(s) = a\Eintt_Sf(s)d\mam(s)\vee b\Eintt_Sg(s)d\mam(s)\,,\mbox{a.e.}.
\]
\newpara{Linear and max--linear isometries}
As we have mentioned in Remark~\ref{rem:1}, the linear isometries and max--linear isometries play important roles in relating two representations of a given S$\alpha$S or an $\alpha$--Fr\'echet process, respectively. The notion of a linear isometry is well known. We give next the definition of \textit{max--linear isometry}.
\begin{Def}[Max-linear isometry]
Let $\alpha>0$ and consider two measure spaces $(S_1,\mu_1)$ and $(S_2,\mu_2)$ with positive and $\sigma$-finite measures $\mu_1$ and $\mu_2$. The mapping $U:\lap(S_1,\mu_1)\to \lap(S_2,\mu_2)$, is said to be a max--linear isometry, if:\\
\itemnumber i For any $f_1,f_2\in \lap(S_1,\mu_1)$ and $\forall a_1,a_2\geq0$, $U(a_1f_1\vee a_2\,f_1) = a_1(Uf_1)\vee a_2(Uf_2), \mu_2\ae$ and\\
\itemnumber {ii} For any $f\in \lap(S_1,\mu_1)$, $\left\|Uf\right\|_{\lap(S_2,\mu_2)}=\left\|f\right\|_{\lap(S_1,\mu_1)}$.
\end{Def}
The linear (max--linear resp.) isometries may be naturally viewed as mappings between linear (max--linear resp.) spaces of functions. 
We say that a subset $\filF\subset\laps$ is a \textit{max--linear space} if for all $n\in\mathbb N, f_i\in\filF,a_i>0$, have $\bigvee_{i=1}^na_if_i\in\filF$ and if $\filF$ is closed w.r.t.\ the metric $\mrho$ defined by $\mrho(f,g) = \int_S|f^\alpha-g^\alpha|d\mu$\,.
A linear (max--linear resp.) isometry may be defined only on a small linear (max--linear resp.) subspace of $\la(S,\mu)$ ($\lap(S,\mu)$ resp.). It is important to understand what is the largest subspace of $\la(S,\mu)$ ($\lap(S,\mu)$ resp.), to which this isometry can be extended uniquely. The answer to this question involves the following notions of \textit{extended ratio spaces}.

\begin{Def}
[Extended ratio spaces] 
Let $F$ be a collection of functions in $\las$.

\itemnumber i The \textit{ratio $\sigma$-field} of $F$, written $\rho(F)\defe\sigma\left(\left\{f_1/f_2, f_1,f_2\in F\right\}\right)$, is defined as the $\sigma$-field generated by ratio of functions in $F$, where the ratios take values in the extended interval $[-\infty,\infty]$;\\
\itemnumber {ii} The \textit{extended ratio space} of $F$, written $\calR_e(F)$, is defined as the class of all functions in $\la(S,\mu)$ that has the form 
\equh\label{rep:ERS}
\calR_e(F) \defe \{rf:rf\in\la(S,\mu), r\in\rho(F), f\in F\}\,. 
\eque
Similarly, we define \textit{extended positive ratio space} of collection of functions $F\subset \laps$:
\equh\label{rep:EPRS}
\eratios F \defe \{rf:rf\in\lap(S,\mu), r\in\rho(F), r\geq 0, f\in F\}\,. 
\eque
\end{Def}
\noindent Note that $\calR_e(F)$ is closed w.r.t.\ linear combinations and the metric $(f,g)\mapsto \nn{f-g}_{\la(S,\mu)}^{1\wedge\alpha}$, and $\calR_{e,+}(F)$ is closed w.r.t.\ max--linear combinations and the metric $\rho_{\mu,\alpha}$. That is, $\calR_e(F)$ is a linear subspace of $\la(S,\mu)$ and $\calR_{e,+}(F)$ is a max--linear subspace of $\lap(S,\mu)$. 
The following result is due to~\citet{hardin81isometries} and~\citet{wang09structure}. 
\begin{Thm}\label{thm:factor}
Let $\filF$ be a linear (max--linear resp.) subspace of $\la(S_1,\mu_1)$ with $0<\alpha<2$. If $U$ is a linear (max--linear resp.) isometry from $\filF$ to ${U(\filF)}$, then $U$ can be uniquely extended to a linear (max--linear resp.) isometry $\widebar U: \calR_{e}(\filF)\to\calR_e(U(\filF))$ ($\widebar U:\calR_{e,+}(\filF)\to\calR_{e,+}(U(\filF))$ resp.), with the form
\equh\label{rep:factor}
\widebar U(rf) = \widebar T(r)U(f)\,,
\eque
for all $rf\in\calR_e(\filF)$ in~\eqref{rep:ERS} ($rf\in\eratiosf$ as in~\eqref{rep:EPRS} resp.).
Here $\widebar T$ is a mapping from $\la(S_1,\rho(\filF),\mu_1)$ to $\la(S_1,\rho(U(\filF)),\mu_2)$. $\widebar T$ is induced by a regular set isomorphism $T$ from $\rho(\filF)$ to $\rho(U(\filF))$.
\end{Thm}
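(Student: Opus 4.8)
The statement is the one established by~\citet{hardin81isometries} in the linear case and by~\citet{wang09structure} in the max--linear case, and the plan is simply to reconstruct that argument; I describe the S$\alpha$S version and indicate the (cosmetic) changes for $\alpha$--Fr\'echet. After passing to the sub--$\sigma$--fields I may assume $\rho(\filF)$ and $\rho(U(\filF))$ are the ambient $\sigma$--fields of $(S_1,\mu_1)$ and $(S_2,\mu_2)$. The one substantive input is that, because $\alpha\ne2$, the isometry $U$ preserves not merely norms but the whole \emph{spectral measure} of every finite subfamily $f_{i_1},\dots,f_{i_k}\in\filF$: the map $(a_1,\dots,a_k)\mapsto\bnn{\sum_j a_j f_{i_j}}_{\la(S_1,\mu_1)}^\alpha$ determines the spectral measure of the S$\alpha$S random vector $(\int f_{i_j}\,d\mas)_{j\le k}$ uniquely when $\alpha\ne2$, and this map is left invariant by $U$. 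In the Fr\'echet case I would pass through $g\mapsto g^\alpha$, which is an isometry of $(\lap,\mrho)$ onto $(\lonep,\nn{\cdot}_1)$ carrying $\vee$--combinations to positively weighted lattice suprema, and use the analogous invariance of the ``max--spectral measure''; there, disjointness of supports is read off from $\nn{f\vee g}_\alpha^\alpha=\nn f_\alpha^\alpha+\nn g_\alpha^\alpha$.

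Granting this, the first task is to build the regular set isomorphism. The $\sigma$--field $\rho(\filF)$ is generated by the sets $A_{f_1,f_2,B}\defe\{s:f_1(s)/f_2(s)\in B\}$ with $f_1,f_2\in\filF$ and $B$ Borel in $[-\infty,\infty]$, and I would \emph{define} $T(A_{f_1,f_2,B})\defe\{s:Uf_1(s)/Uf_2(s)\in B\}$. Spectral--measure invariance yields, for all $f_1,f_2,f\in\filF$ and Borel $B$, the transport identity
\equh\label{eq:transport}
\int_{A_{f_1,f_2,B}}|f|^\alpha\,d\mu_1 \;=\; \int_{T(A_{f_1,f_2,B})}|Uf|^\alpha\,d\mu_2\,,
\eque
since both sides are integrals against the (common) spectral measure of the triple $(f_1,f_2,f)$. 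Taking $f$ with full support, \eqref{eq:transport} shows $T$ carries $\mu_1$--null sets to $\mu_2$--null sets and conversely, and that it respects complementation and countable unions; hence $T$ is well defined modulo null sets and extends to a regular set isomorphism $\rho(\filF)\to\rho(U(\filF))$.

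Next I would introduce the associated operator $\widebar T$ in the standard way --- $\widebar T(\ind_A)=\ind_{T(A)}$, extended to simple functions by linearity and then to arbitrary measurable functions by a.e.\ limits, using that regular set isomorphisms preserve order and a.e.\ convergence --- and note that, by \eqref{eq:transport}, $\widebar T$ transports $|f|^\alpha\,d\mu_1$ to $|Uf|^\alpha\,d\mu_2$ for each $f\in\filF$. Setting $\widebar U(rf)\defe\widebar T(r)\,U(f)$ for $rf\in\calR_e(\filF)$ as in~\eqref{rep:ERS}, I would verify in turn: well--definedness (if $r_1f_1=r_2f_2$ $\mu_1$--a.e.\ then $\widebar T(r_1)Uf_1=\widebar T(r_2)Uf_2$ $\mu_2$--a.e., which via \eqref{eq:transport} reduces to the corresponding identity pulled through $T$); the isometry property $\int|\widebar T(r)|^\alpha|Uf|^\alpha\,d\mu_2=\int|r|^\alpha|f|^\alpha\,d\mu_1$, first for simple $r$ and then in the limit; that $\widebar U$ extends $U$ (take $r\equiv1$) and is linear, by linearity of $\widebar T$ and $U$; and uniqueness, since $\calR_e(\filF)$ is the closure of $\{rf\}$ in the metric $\nn{\cdot}_{\la(S_1,\mu_1)}^{1\wedge\alpha}$, so any linear isometric extension of $U$ must agree with $\widebar U$ on the generators. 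The $\alpha$--Fr\'echet case is word--for--word the same, with $\vee$ for $+$, $\mrho$ for $\nn{\cdot}^{1\wedge\alpha}$, and $\eratiosf$, $\calR_{e,+}(U(\filF))$ in place of the $\calR_e$'s.

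The genuine obstacle is the first step: showing the ratio level--sets are intrinsic to $U$, i.e.\ the well--definedness of $T$ together with \eqref{eq:transport}. This is precisely where $\alpha\ne2$ is indispensable --- at $\alpha=2$ the form $(a,b)\mapsto\nn{af_1+bf_2}_2^2$ records only the Gram matrix, the spectral measure is not determined, and the conclusion fails. The secondary, unavoidable chore is the bookkeeping on degenerate sets: where denominators vanish, where a whole subfamily vanishes, and (in the S$\alpha$S case) where sign patterns must be tracked; one disposes of these by fixing suitable conventions and, where necessary, enlarging the auxiliary families. In the Fr\'echet case the substitution $g\mapsto g^\alpha$ and the nonnegativity of all functions make this step appreciably cleaner.
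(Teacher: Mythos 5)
The paper does not actually prove Theorem~\ref{thm:factor}: it imports the statement from \citet{hardin81isometries} (linear case) and \citet{wang09structure} (max--linear case), so there is no in--paper argument to compare against. Your sketch is a faithful reconstruction of the Hardin route used in those sources: the invariance under $U$ of the spectral measure of every finite subfamily (valid precisely because $\alpha\neq 2$), the definition of the set map $T$ on ratio level sets $\{f_1/f_2\in B\}$ together with the transport identity, the construction of $\widebar T$ through simple functions, and the reduction of the Fr\'echet case via $g\mapsto g^\alpha$ are exactly the ingredients of the cited proofs, and you correctly identify $\alpha\neq 2$ as the load--bearing hypothesis and the extension of $T$ to all of $\rho(\filF)$ as the delicate step.

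The one place where the justification you wrote does not do the job is uniqueness. You argue that since $\calR_e(\filF)$ is the closure of the set of products $rf$, any isometric extension of $U$ ``must agree with $\widebar U$ on the generators.'' Density only shows that an extension is \emph{determined} by its values on the generators; it does not \emph{force} those values. The products $rf$ generally lie outside the closed linear span of $\filF$, so knowing $V|_{\filF}=U$ gives no direct information about $V(rf)$. The standard repair --- and what the cited factorization actually delivers --- is to apply the theorem to the extension $V$ itself on the larger space: ratios of elements of $\calR_e(\filF)$ are again $\rho(\filF)$--measurable, so $\rho(\calR_e(\filF))=\rho(\filF)$, and the regular set isomorphism induced by $V$ is computed from ratio level sets of elements of $\filF$ alone and hence coincides with $T$; therefore $V(rf)=\widebar T(r)U(f)=\widebar U(rf)$. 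With that substitution, and granting the bookkeeping you already flag (existence of full--support functions, the monotone--class extension of $T$ from the generating ratio sets to $\rho(\filF)$, and the sign and degeneracy conventions), your outline is a correct account of the proofs the paper relies on.
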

\noindent For definition of \textit{regular set isomorphism}, see~\citet{lamperti58isometries},~\citet{hardin81isometries} or~\citet{wang09structure}. The following remark on Theorem~\ref{thm:factor}, particularly (iii), is crucial for the identification of two types of isometries.
\begin{Rem}\label{rem:2}
\itemnumber {i} $\widebar U$ is well defined in the sense that for any $r_if_i\in\eratiosf\,,i=1,2$ in~\eqref{rep:EPRS}, if $r_1f_1 = r_2f_2\,,\mu_1\ae$, then $\widebar U(r_1f_1) = \widebar U(r_2f_2)\,,\mu_2\ae$.\\
\itemnumber {ii} $T$ maps any two almost disjoint sets to almost disjoint sets.\\
\itemnumber {iii} Mapping $\widebar T$ is both max--linear and linear and maps nonnegative functions to nonnegative functions. This follows from the construction of $\widebar T$ via simple functions, and the fact that $\widebar T\ind_A = \ind_{T(A)}$ for measurable $A\subset S_1$. By (ii), for any simple functions $f = \sum_{i=1}^na_i\ind_{A_i}$ and $g = \sum_{j=1}^mb_j\ind_{B_j}$, where $A_i, B_j$ are mutually disjoint and $a_i,b_j\in\mathbb R$, we have
\[
\widebar T(f+g) = \widebar Tf + \widebar Tg \qmand \widebar T(f\vee g) = \widebar Tf\vee \widebar Tg\,.
\]

\itemnumber {iv}
When $\filF$ is a max--linear subspace and $U$ is a max--linear isometry, $\widebar U$ in~\eqref{rep:factor} is a linear isometry from $\calR_e(\filF)$ to $\calR_e(U(\filF))$. Indeed, by (iii), the max--linearity of $\widebar T$ implies linearity of $\widebar T$, and hence that of $\widebar U$. The isometry follows from the isometry for nonnegative functions and by (ii).
\end{Rem}
To make good use of (iii) in Remark~\ref{rem:2}, we introduce the notion of \textit{positive--linearity}.
We say a linear isometry $U$ is a \textit{positive--linear} isometry, if $U$ maps all nonnegative functions to nonnegative functions. Accordingly, 
we say that $\filF\subset\laps$ is a \textit{positive--linear space}, if it is closed w.r.t.\ $\mrho$ and if it is closed w.r.t positive--linear combinations, i.e., for all $n\in\mathbb N, f_i\in\filF,a_i>0$, we have $g\defe\sum_{i=1}^na_if_i\in\filF$. Note that the metric $(f,g)\mapsto\nn{f-g}_{\la(S,\mu)}^{1\wedge\alpha}$ restricted to $\lap(S,\mu)$ generates the same topology as the metric $\rho_{\mu,\alpha}$. Clearly, Theorem~\ref{thm:factor} holds if $\filF$ is a positive--linear (instead of a linear) subspace of $\laps$. In this case, $\widebar U$ is also positive--linear.
We conclude this section with the following refinement of statement (iii) in Remark~\ref{rem:2}.
\begin{Prop}\label{prop:extendedRatio2}
Let $U$ be as in Theorem~\ref{thm:factor}. If $\filF$ is a positive--linear subspace of $\lap(S_1,\mu_1)$, then the linear isometry $\widebar U$ in~\eqref{rep:factor} is also a max--linear isometry from $\eratiosf$ to $\eratios {U(\filF)}$. If $\filF$ is a max--linear subspace of $\la(S_1,\mu_1)$, then the max--linear isometry $\widebar U$ in~\eqref{rep:factor} is also a positive--linear isometry from $\calR_e(\filF)$ to $\calR_e(U(\filF))$.
\end{Prop}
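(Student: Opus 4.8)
The plan is to route everything through the factorization $\widebar U(rf)=\widebar T(r)\,U(f)$ supplied by Theorem~\ref{thm:factor}, and to exploit the single structural fact recorded in Remark~\ref{rem:2}(iii): the map $\widebar T$ is \emph{at once} linear and max--linear and preserves nonnegativity. This is exactly what forces a map built as an isometry for one of the two operations to respect the other one as well. For the second assertion little is left to prove, since Remark~\ref{rem:2}(iv) already gives that $\widebar U$ is a linear isometry on $\calR_e(\filF)$; I would only have to check that $\widebar U$ maps nonnegative functions to nonnegative functions. For the first assertion, Theorem~\ref{thm:factor} in its positive--linear form (noted just before the statement) already gives that $\widebar U:\eratiosf\to\eratios{U(\filF)}$ is a positive--linear isometry, so I would only have to check max--linearity.

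To get max--linearity in the first assertion, the first step is to write the two inputs over a common spectral function. Given $g_i=r_if_i\in\eratiosf$ with $r_i\in\rho(\filF)$, $r_i\geq0$ and $f_i\in\filF$, put $f\defe f_1+f_2\in\filF$, which is legitimate because $\filF$ is positive--linear. Since $f_i/f$ is a ratio of members of $\filF$, it is $\rho(\filF)$--measurable, so $g_i=\tilde r_if$ with $\tilde r_i\defe r_i(f_i/f)\in\rho(\filF)$, $\tilde r_i\geq0$, and replacing the representative does not change $\widebar U(g_i)$ since $\widebar U$ is a well--defined map (Remark~\ref{rem:2}(i)). As $f\geq0$ one may factor $f$ out of the maximum: for $a_1,a_2\geq0$, $a_1g_1\vee a_2g_2=(a_1\tilde r_1\vee a_2\tilde r_2)f\in\eratiosf$. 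Now apply the factorization and then the linearity and max--linearity of $\widebar T$,
\[
\widebar U(a_1g_1\vee a_2g_2)=\widebar T(a_1\tilde r_1\vee a_2\tilde r_2)\,U(f)=\bigl(a_1\widebar T(\tilde r_1)\vee a_2\widebar T(\tilde r_2)\bigr)U(f),
\]
and use $U(f)=\widebar U(f)\geq0$ (positivity of $\widebar U$) to push $U(f)$ back inside the maximum, obtaining $a_1\widebar T(\tilde r_1)U(f)\vee a_2\widebar T(\tilde r_2)U(f)=a_1\widebar U(g_1)\vee a_2\widebar U(g_2)$. This is precisely max--linearity; combined with the isometry property already in hand, $\widebar U$ is a max--linear isometry.

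For the positivity needed in the second assertion, take $h=rf\in\calR_e(\filF)$ with $h\geq0$, $r\in\rho(\filF)$, $f\in\filF$. A max--linear space consists of nonnegative functions, so $f\geq0$; hence $r\geq0$ on $\{f>0\}$ and $h=(r\vee0)f$ $\mu_1$--a.e., which lets me assume $r\geq0$ (again using that $\widebar U$ is well defined). Then $\widebar U(h)=\widebar T(r)\,U(f)$, where $\widebar T(r)\geq0$ by Remark~\ref{rem:2}(iii) and $U(f)\geq0$ because $U$ is a max--linear isometry, so $Uf\in\lap(S_2,\mu_2)$; therefore $\widebar U(h)\geq0$ and $\widebar U$ is positive--linear. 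I expect the only delicate part of a full write--up to be the measure--theoretic bookkeeping — verifying that the auxiliary ratios land in $\rho(\filF)$ and that the chosen representatives are admissible — after which the simultaneous linearity and max--linearity of $\widebar T$ carries all the real weight.
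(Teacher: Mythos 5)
Your proposal is correct and follows essentially the same route as the paper: both rest on the factorization $\widebar U(rf)=\widebar T(r)U(f)$ together with Remark~\ref{rem:2}(iii)--(iv), the paper's own proof being a three--line sketch that writes out only the ``max--linear $\Rightarrow$ positive--linear'' direction and declares the other case symmetric. Your common--representative argument (writing $g_i=\tilde r_i(f_1+f_2)$ and pushing $U(f)\geq0$ through the maximum) is exactly the detail the paper's ``the proof will be the same'' elides, and it is carried out correctly.
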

\begin{proof}
Suppose $\filF$ is max--linear and $\widebar U$ is a max--linear isometry. We show $\widebar U$ is also positive--linear. The proof will be the same for the other case. First, (iv) of Remark~\ref{rem:2} implies $\widebar U$ is a linear isometry. Then, observe that $U$ maps nonnegative functions in $\filF$ to nonnegative functions in $U(\filF)$, and so does $\widebar T$. This shows that $\widebar U$ is a positive--linear isometry.
\end{proof}


\section{Identification of Max--Linear and Positive--Linear Isometries}\label{sec:EPRS}
Here, we will first show that the max--linear and positive--linear isometries are identical on the extended positive ratio space. Then, we prove the following theorem, which is the main result of this section. It will be used to relate S$\alpha$S and $\alpha$--Fr\'echet processes in the next section. The results of Theorem~\ref{thm:factor} (see~\cite{hardin81isometries}) on linear isometries do not apply to the case
$\alpha = 2$. Thus, from now on, we shall assume 
\[
0<\alpha<2\,. 
\]
\begin{Thm}\label{thm:1}
Consider two arbitrary collections of functions $f_1\topp i,\dots,f_n\topp i\in\lap(S_i,\mu_i)\,, i=1,2,\ 0<\alpha<2$. Then,

\equh\label{eq:thm1sum}
\bnn{\sum_{j=1}^na_jf_j\topp 1}_{\la(S_1,\mu_1)} = \bnn{\sum_{j=1}^na_jf_j\topp 2}_{\la(S_2,\mu_2)}\,,\mbox{ for all } a_j\in\mathbb R\,,
\eque
if and only if
\equh\label{eq:thm1max}
\bnn{\bigvee_{j=1}^na_jf_j\topp 1}_{\lap(S_1,\mu_1)} = \bnn{\bigvee_{j=1}^na_jf_j\topp 2}_{\lap(S_2,\mu_2)}\,,\mbox{ for all } a_j\geq 0\,.
\eque
\end{Thm}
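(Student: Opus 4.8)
The plan is to reduce the equivalence to the already-established factorization machinery of Theorem~\ref{thm:factor} and Proposition~\ref{prop:extendedRatio2}, by turning each of the two norm-equality hypotheses into a statement about the existence of an isometry on a finitely-generated subspace. First I would treat the direction ``\eqref{eq:thm1sum} $\Rightarrow$ \eqref{eq:thm1max}''. Assume \eqref{eq:thm1sum} holds. Since all the $f_j\topp i$ are nonnegative, the map sending (a representative of) $\sum_j a_j f_j\topp1$ to $\sum_j a_j f_j\topp2$ for $a_j \ge 0$ is well-defined and norm-preserving on the positive-linear span $\filF \defe \psspan\{f_1\topp1,\dots,f_n\topp1\}$ (well-definedness on $\mu_1$-a.e. equivalence classes follows because \eqref{eq:thm1sum} forces $\|\sum a_jf_j\topp1 - \sum b_jf_j\topp1\| = \|\sum a_jf_j\topp2 - \sum b_jf_j\topp2\|$ after taking closures); extend by continuity in $\mrhodo$ to the closure, obtaining a positive--linear isometry $U$ from $\filF$ onto $U(\filF)$. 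Now apply Theorem~\ref{thm:factor} (valid for positive--linear subspaces, as noted after Remark~\ref{rem:2}): $U$ extends to $\widebar U$ on $\eratiosf$ of the factorized form $\widebar U(rf) = \widebar T(r)U(f)$. By Proposition~\ref{prop:extendedRatio2}, $\widebar U$ is \emph{also} a max--linear isometry. Since $\bigvee_{j=1}^n a_j f_j\topp1 \in \eratiosf$ (finite max-combinations of the generators lie in the extended positive ratio space), the max-linear isometry property gives $\widebar U(\bigvee_j a_j f_j\topp1) = \bigvee_j a_j \widebar U(f_j\topp1) = \bigvee_j a_j f_j\topp2$, and the isometry part of ``max--linear isometry'' yields exactly \eqref{eq:thm1max}.

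The converse direction ``\eqref{eq:thm1max} $\Rightarrow$ \eqref{eq:thm1sum}'' is symmetric in structure: assuming \eqref{eq:thm1max}, the assignment $\bigvee_j a_j f_j\topp1 \mapsto \bigvee_j a_j f_j\topp2$ ($a_j \ge 0$) is a well-defined max--linear isometry from the max--linear span $\filG \defe \cpsspan\{f_1\topp1,\dots,f_n\topp1\}$ (closure taken in $\mrhodo$) onto its image. Apply Theorem~\ref{thm:factor} in its max--linear form: $\widebar U$ on $\calR_{e,+}(\filG)$ has the factorized form, and by Proposition~\ref{prop:extendedRatio2} (the second assertion) $\widebar U$ is \emph{also} positive--linear. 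Since the positive--linear combinations $\sum_j a_j f_j\topp1$ ($a_j \ge 0$) lie in $\calR_{e,+}(\filG)$, positive--linearity plus isometry gives $\|\sum_j a_j f_j\topp1\| = \|\sum_j a_j f_j\topp2\|$ for nonnegative coefficients. Finally I would upgrade this from nonnegative to arbitrary real $a_j$: for general signs, split indices into $J_+ = \{j : a_j \ge 0\}$ and $J_- = \{j : a_j < 0\}$; the functions $g_1 \defe \sum_{J_+} a_j f_j\topp1$ and $g_2 \defe \sum_{J_-} |a_j| f_j\topp1$ have \emph{disjoint supports} only if the generators do, which we cannot assume, so instead I would note that the factorized form $\widebar U = \widebar T(\cdot)\,U(\cdot)$ together with $\widebar T$ being both linear and positivity-preserving (Remark~\ref{rem:2}(iii)) means $\widebar U$ acts linearly on \emph{all} real linear combinations of the $f_j\topp1$ lying in $\calR_e$, not just the nonnegative ones — this is exactly what the ``linear isometry'' half of Proposition~\ref{prop:extendedRatio2} delivers once we observe $\widebar U$ extends to the (full, signed) extended ratio space $\calR_e(\filG)$ as a linear isometry. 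Hence $\|\sum_j a_j f_j\topp1\|_{\la(S_1,\mu_1)} = \|\sum_j a_j f_j\topp2\|_{\la(S_2,\mu_2)}$ for all real $a_j$, which is \eqref{eq:thm1sum}.

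The main obstacle I anticipate is the bookkeeping around \emph{which} isometry/space the factorization theorem is being applied to, and making sure the two hypotheses are genuinely interchangeable as starting points. Concretely: (a) checking well-definedness of the candidate isometry on $\mu_i$-a.e.-equivalence classes directly from a norm equality (rather than from an a priori given process) requires the polarization-type remark that a norm identity for \emph{all} coefficient vectors controls differences; (b) one must confirm that $\bigvee_{j=1}^n a_j f_j\topp1$ really belongs to $\eratiosf$ and $\sum_{j=1}^n a_j f_j\topp1$ to $\calR_{e,+}$ of the max--linear span — these follow because a finite max (resp. sum) of the generators is of the form $rf$ with $r$ in the ratio $\sigma$-field of the generators and $f$ one of the generators, but it is worth spelling out; and (c) the sign-extension step in the converse, where one leans on Remark~\ref{rem:2}(iii) (that $\widebar T$ is simultaneously linear, max--linear, and positivity-preserving) to pass from nonnegative to signed linear combinations. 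None of these is deep, but they are the places where a sloppy argument would break, so I would write them carefully; the genuine content — that a positive--linear isometry is automatically max--linear on the extended positive ratio space and vice versa — has already been packaged into Proposition~\ref{prop:extendedRatio2}, so Theorem~\ref{thm:1} is essentially its ``coordinate-free'' restatement in terms of norms of finite combinations.
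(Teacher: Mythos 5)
Your proof is correct and follows essentially the same route as the paper's: both directions come down to building the obvious isometry on the positive--linear (resp.\ max--linear) span, extending it via Theorem~\ref{thm:factor}, and invoking Proposition~\ref{prop:extendedRatio2}, with the membership facts you flag in item (b) (that $\bigvee_j a_j f_j^{(1)}$ lies in the extended positive ratio space of the positive--linear span, and dually) being exactly what the paper packages as Proposition~\ref{prop:extendedRatio1} together with Lemma~\ref{lem:equalSigma}. One small correction to your sketch of (b): a finite max--combination need not equal $rf$ with $f$ a \emph{single} generator, since that generator may vanish where another does not; you must divide by a full--support element of the span, e.g.\ $f=\sum_k f_k^{(1)}$, which is precisely the device used in the paper's proof of Proposition~\ref{prop:extendedRatio1}.
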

Before we can prove this theorem, we need an auxiliary result. 
We need to find a subspace of $\lap(S,\mu)$, which is closed w.r.t.\ the max--linear and positive--linear combinations. In the sequel, for any collection of functions $F\subset\laps$, we let
\equh\label{eq:filF}
\quad\filF_+ \defe \cpsspan\{F\}\qmand\filF_\vee \defe\cmsspan\{F\}
\eque
denote the smallest max--linear and positive--linear subspace of $\laps$ containing $F$, respectively. We call them the max--linear space and positive--linear space generated by $F$, respectively. (We also write $\filF \defe \csspan\{F\}$ as the smallest linear subspace of $\la(S,\mu)$ containing $F$.)
In general, we have $\filF_+\neq\filF_\vee$. This means both $\filF_+$ and $\filF_\vee$ are too small to be closed w.r.t.\ both `$\sum$' and `$\bigvee$' operators. However, we will show that these two subspaces generate the same extended positive ratio space, on which the two types of isometries are identical. The following fact is proved in the Appendix.
\begin{Prop}\label{prop:extendedRatio1}
Let $F$ be an arbitrary collection of functions in $\laps$. Then $\eratios {\filF_+} = \eratios {\filF_\vee}$.
\end{Prop}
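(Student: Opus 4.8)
The plan is to exhibit a single space that both $\eratios{\filF_+}$ and $\eratios{\filF_\vee}$ coincide with, by fixing a common nonnegative \emph{reference function} of full support. (We may assume $F$ contains a nonzero function, the claim being trivial otherwise.) Since $\mu$ is $\sigma$-finite, the essential union $A$ of the supports $\{\supp f:f\in F\}$ is attained along a countable subfamily $\{f_k\}_{k\ge1}\subseteq F$, i.e.\ $A=\bigcup_{k\ge1}\supp f_k$ up to a $\mu$-null set. With $c_k\defe 2^{-k}(1+\nn{f_k}_\alpha)\inv$, set
\[
h\defe\sum_{k\ge1}c_kf_k\qmand h_\vee\defe\bigvee_{k\ge1}c_kf_k\,.
\]
Standard norm estimates (subadditivity of $\nn{\cdot}_\alpha^{\alpha\wedge1}$ for the sum, and $\int\sup_k(c_kf_k)^\alpha\,d\mu\le\sum_kc_k^\alpha\nn{f_k}_\alpha^\alpha$ for the max) give $h,h_\vee\in\laps$, and each is the $\rho_{\mu,\alpha}$-limit of its increasing finite partial sums, resp.\ partial maxima; hence $h\in\filF_+$, $h_\vee\in\filF_\vee$, and $\supp h=\supp h_\vee=A$ up to a $\mu$-null set. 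Throughout, ratios are read with the conventions $0/0\defe0$ and $x/0\defe\infty$ for $x>0$, under which $\supp f$ is $\rho(\{f\})$-measurable for every $f\in\laps$.

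The first step is to show that $g/h$ is $\rho(F)$-measurable for every $g\in\filF_+\cup\filF_\vee$, and to deduce $\rho(\filF_+)=\rho(\filF_\vee)=\rho(F)\Edef\rho$. For $g$ a finite positive--linear or max--linear combination $\sum_i a_if_i$, resp.\ $\bigvee_i a_if_i$, of members of $F$, on $A$ one has $g/h=\sum_ia_i(f_i/h)$, resp.\ $g/h=\bigvee_ia_i(f_i/h)$, with $f_i/h=\ind_{\supp f_i}\cdot(h/f_i)\inv$ and $h/f_i=\sum_kc_k(f_k/f_i)$ being $\rho(F)$-measurable; off $A$, $g/h\equiv0$. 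Measurability is preserved under the `$\sum$' and `$\bigvee$' operations, and on passing to the closures $\filF_+,\filF_\vee$ it is preserved under $\mu$-a.e.\ limits, since convergence in $\rho_{\mu,\alpha}$ gives $\mu$-a.e.\ convergence along a subsequence and all approximants (hence their limits) are supported in $A$. Finally, for $g_1,g_2\in\filF_+\cup\filF_\vee$ one checks $g_1/g_2=(g_1/h)/(g_2/h)$ $\mu$-a.e., which is then $\rho(F)$-measurable; together with the obvious inclusion $\rho(F)\subseteq\rho(\filF_+)\cap\rho(\filF_\vee)$ this gives $\rho(\filF_+)=\rho(\filF_\vee)=\rho$, and in particular $h/h_\vee$ is $\rho$-measurable.

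The second step is to set $\calR^*\defe\{g\in\laps:\supp g\subseteq A\text{ up to a }\mu\text{-null set and }g/h\text{ is }\rho\text{-measurable}\}$ and to show $\eratios{\filF_+}=\calR^*=\eratios{\filF_\vee}$. If $g=rf$ with $r\ge0$ being $\rho$-measurable (recall $\rho$ is the common ratio $\sigma$-field) and $f\in\filF_+$, resp.\ $f\in\filF_\vee$, then $\supp g\subseteq\supp f\subseteq A$ and $g/h=r\cdot(f/h)$ is $\rho$-measurable, so $g\in\calR^*$. Conversely, given $g\in\calR^*$, the equalities $\supp h=\supp h_\vee=A$ yield $g=(g/h)\,h=(g/h_\vee)\,h_\vee$ $\mu$-a.e.; since $g/h\ge0$ is $\rho$-measurable and $h\in\filF_+$ we get $g\in\eratios{\filF_+}$, and since $g/h_\vee=(g/h)(h/h_\vee)\ge0$ is $\rho$-measurable and $h_\vee\in\filF_\vee$ we get $g\in\eratios{\filF_\vee}$. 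Hence $\eratios{\filF_+}=\eratios{\filF_\vee}$.

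I expect the main obstacle to be the measurability bookkeeping in the first step: handling the extended-valued ratios on the zero sets of the functions involved, verifying that $\supp f$ and the various ratios lie in $\rho(F)$, and checking that $\mu$-a.e.\ limits behave correctly there. The passage to a countable subfamily, which uses $\sigma$-finiteness of $\mu$, is what makes the reference functions $h$ and $h_\vee$ available in the first place; everything else is a direct verification.
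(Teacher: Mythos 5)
Your proof is correct, and its skeleton is the same as the paper's: both arguments hinge on a full--support reference element of $\filF_+$ (resp.\ $\filF_\vee$) combined with the fact that ratios of elements drawn from $\filF_+\cup\filF_\vee$ are measurable with respect to the common ratio $\sigma$-field $\rho(F)$ (the paper's Lemma~\ref{lem:equalSigma}). The differences are in execution, and they work in your favor. The paper imports the existence of a full--support function in $\filF_\vee$ from Lemma~3.2 of \citet{wang09structure} and then checks that it also has full support in $\filF_+$; you instead construct $h\in\filF_+$ and $h_\vee\in\filF_\vee$ explicitly, as a weighted series and a countable maximum over a countable subfamily realizing the essential union of supports, which makes the argument self--contained and makes the $\sigma$-finiteness of $\mu$ visibly do its work. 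Routing both spaces through the common description $\calR^*$ then delivers the two inclusions symmetrically in one stroke, where the paper proves one inclusion in detail and asserts the other is similar; along the way you reprove the content of Lemma~\ref{lem:equalSigma} rather than citing it. The only caveat --- which you flag yourself and which is present to the same degree in the paper's own appendix --- is that ``$\rho(F)$-measurable'' for a.e.-defined functions and their a.e.\ limits really means measurable after modification on a $\mu$-null set; this is harmless here because membership in $\eratios{\filF_+}$ or $\eratios{\filF_\vee}$ only requires the representation $g=rf$ to hold $\mu$-a.e.
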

\begin{proof}[Proof of Theorem~\ref{thm:1}]
Let $F\topp i\defe\{f_1\topp i,\dots,f_n\topp i\}\subset\lap(S_i,\mu_i),n\in\mathbb N$.
We prove the `only if' part. Suppose Relation~\eqref{eq:thm1sum} holds. We will show that Relation~\eqref{eq:thm1max} holds. 
There exists unique linear mapping $U$ from $\filF\topp1$ onto $\filF\topp2$, such that
\[
Uf_j\topp1 = f_j\topp 2\,,1\leq j\leq n.
\]
Note that since the functions $f_j\topp i$ are nonnegative, we have $U(\filF_+\topp 1) = \filF_+\topp 2$ and $U(\filF_\vee\topp 1) = \filF_\vee\topp 2$. 
Relation~\eqref{eq:thm1sum} implies that $U$ is a positive--linear isometry from $\filF_+\topp1$ to $U(\filF_+\topp1)$. Thus, Theorem~\ref{thm:factor} implies that the mapping 
\[
\widebar U:\calR_e(\filF_+\topp1)\to\calR_e(U(\filF_+\topp2))
\]
with form~\eqref{rep:factor} is a positive--linear isometry.
By Proposition~\ref{prop:extendedRatio1}, we have $\calR_e(\filF_+\topp i) = \calR_e(\filF_\vee\topp i),i=1,2$. Hence, $\widebar U$ is a positive--linear isometry from $\calR_e(\filF_\vee\topp1)$ to $\calR_e(U(\filF_\vee\topp1))$. By Proposition~\ref{prop:extendedRatio2}, $\widebar U$ is also a max--linear isometry from $\eratios{\filF_\vee\topp1}$ to $\eratios{U(\filF_\vee\topp1)}$, whence Relation~\eqref{eq:thm1max} holds. The proof of the `if' part is similar.
\end{proof}

To conclude this section, we will address the following question: for $f_1\topp 1,\dots,f_n\topp 1\in\la(S_1,\mu_1)$, does there always exist $f_1\topp2,\dots,f_n\topp2\in\lap(S_2,\mu_2)$ such that Relation~\eqref{eq:thm1sum} holds for any $a_j\in\mathbb R$? The answer is negative. 
\begin{Prop}\label{prop:associable}
Consider $f_j\topp 1\in\la(S_1,\mu_1), 1\leq j\leq n$. Then, there exist some $f_j\topp 2\in\lap(S_2,\mu_2)\,,1\leq j\leq n$ such that~\eqref{eq:thm1sum} holds, if and only if
\equh\label{cond:associable2}
f_i\topp1(s)f_j\topp1(s)\geq 0\,,\mu_1\ae \mbox{ for all } 1\leq i,j\leq n\,..
\eque
When \eqref{cond:associable2} is true, one can take $f_i\topp 2(s)\defe |f_i\topp1(s)|, 1\leq i\leq n$ and $(S_2,\mu_2) \equiv (S_1,\mu_1)$ for~\eqref{eq:thm1sum} to hold.
\end{Prop}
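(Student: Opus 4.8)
The plan is to prove the two directions separately, with the ``if'' direction being essentially a direct computation and the ``only if'' direction requiring a pointwise sign-consistency argument built from the equality of all linear norms.

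For the ``if'' direction, assume \eqref{cond:associable2} holds. Then for $\mu_1$-a.e.\ $s$, the values $f_1\topp1(s),\dots,f_n\topp1(s)$ all share a common sign (some may be zero), so that $|\sum_j a_j f_j\topp1(s)|$ and $|\sum_j a_j |f_j\topp1(s)||$ need not be equal pointwise — but that is not quite what we need. Instead, observe that the claim is that taking $f_i\topp2 \defe |f_i\topp1|$ and $(S_2,\mu_2) = (S_1,\mu_1)$ makes \eqref{eq:thm1sum} hold. On the set where all $f_j\topp1(s)\geq 0$ there is nothing to prove. On the set where all $f_j\topp1(s)\leq 0$, we have $|f_j\topp1(s)| = -f_j\topp1(s)$, so $\sum_j a_j|f_j\topp1(s)| = -\sum_j a_j f_j\topp1(s)$, whence $|\sum_j a_j|f_j\topp1(s)||^\alpha = |\sum_j a_j f_j\topp1(s)|^\alpha$. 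Since by \eqref{cond:associable2} the domain $S_1$ is (up to $\mu_1$-null sets) the union of these two sets, integrating over $S_1$ gives $\|\sum_j a_j f_j\topp1\|_{\la(S_1,\mu_1)}^\alpha = \|\sum_j a_j |f_j\topp1|\|_{\la(S_1,\mu_1)}^\alpha$ for all $a_j$, which is \eqref{eq:thm1sum} with $f_j\topp2 = |f_j\topp1|$.

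For the ``only if'' direction, suppose $f_1\topp2,\dots,f_n\topp2\in\lap(S_2,\mu_2)$ exist with \eqref{eq:thm1sum} valid for all $a_j\in\mathbb R$. Since the $f_j\topp2$ are nonnegative, \eqref{eq:thm1sum} gives in particular that the collection $\{f_j\topp1\}$ induces a positive-linear isometry onto $\{f_j\topp2\}$, so by Theorem~\ref{thm:factor} there is a regular set isomorphism $T$ on the relevant ratio $\sigma$-fields, and $\widebar U(r f_j\topp1) = \widebar T(r) f_j\topp2$. The key point: by Remark~\ref{rem:2}(iii), $\widebar T$ maps nonnegative functions to nonnegative functions. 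Now fix $i\neq j$ and consider the sign pattern of $(f_i\topp1, f_j\topp1)$. Applying \eqref{eq:thm1sum} to the two coefficient vectors $a = e_i + e_j$ and $a = e_i - e_j$, and using that $f_i\topp2, f_j\topp2 \geq 0$ together with the standard fact that $\|g+h\|_\alpha = \|g-h\|_\alpha$ for all choices forces $gh = 0$ in $\la$ when $0<\alpha<2$ (this is the same ``parallelogram-type'' rigidity underlying the Hardin/Lamperti theory), one obtains that on the set where $f_i\topp1$ and $f_j\topp1$ have \emph{opposite} signs, the integrand must vanish — equivalently $f_i\topp1 f_j\topp1 \geq 0$ a.e. More carefully, I would argue: the map $f_i\topp1 \mapsto f_i\topp2$, $f_j\topp1\mapsto f_j\topp2$ extends to a positive-linear isometry $\widebar U$ of the generated extended ratio spaces; since $\widebar U$ preserves nonnegativity in both directions (its inverse is also of the same form by symmetry of Theorem~\ref{thm:factor}) and $f_i\topp2\geq 0$, the function $\widebar U^{-1}(f_i\topp2) = f_i\topp1$ restricted appropriately must be expressible so that sign-conflicts between $f_i\topp1$ and $f_j\topp1$ are impossible, because a regular set isomorphism cannot send a signed function to a nonnegative one unless the original was nonnegative on the relevant part of its support. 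Translating this back yields $f_i\topp1 f_j\topp1\geq 0$ $\mu_1$-a.e.\ for all $i,j$.

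The main obstacle I anticipate is making the ``only if'' argument fully rigorous without hand-waving: the cleanest route is probably to avoid invoking the inverse isometry and instead argue directly from \eqref{eq:thm1sum}. Specifically, consider $n=2$ (the general case follows by applying the pairwise conclusion to each pair $i,j$, noting \eqref{cond:associable2} is a pairwise condition). Decompose $S_1$ into the measurable pieces where $(\sign f_1\topp1, \sign f_2\topp1)$ takes each of the values in $\{+,0,-\}^2$, and show that the $(+,-)$ and $(-,+)$ pieces are $\mu_1$-null. To see this, choose test coefficients $a_1, a_2$ cleverly: the equality $\|a_1 f_1\topp1 + a_2 f_2\topp1\|_\alpha = \|a_1 f_1\topp2 + a_2 f_2\topp2\|_\alpha$ with both $f_j\topp2\geq 0$ means the right side, as a function of $(a_1,a_2)$ on the positive quadrant $a_1,a_2>0$, is nondecreasing in each variable (since $f_j\topp2\geq 0$ pointwise); but on the left side, presence of a conflicting-sign piece would create cancellation making the norm non-monotone in $(a_1,a_2)$ on that quadrant. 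Quantifying this monotonicity failure — e.g.\ differentiating or comparing $\|f_1\topp1 + a_2 f_2\topp1\|_\alpha$ at $a_2 = 0$ versus small $a_2>0$ over the conflict set — yields a contradiction unless the conflict set is null. This monotonicity argument is elementary but requires care with the case $\alpha < 1$ versus $\alpha\geq 1$ in handling convexity/derivative estimates, and that bookkeeping is where most of the work lies.
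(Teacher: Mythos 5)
Your ``if'' direction is correct and is just an explicit version of the paper's one--line argument: under \eqref{cond:associable2} the nonzero values among $f_1\topp1(s),\dots,f_n\topp1(s)$ share a common sign for $\mu_1$--a.e.\ $s$, so $|\sum_j a_jf_j\topp1(s)|=|\sum_j a_j|f_j\topp1(s)||$ pointwise a.e., and \eqref{eq:thm1sum} follows by integration. For the ``only if'' direction, your \emph{first} route is essentially the paper's proof: extend $f_j\topp1\mapsto f_j\topp2$ via Theorem~\ref{thm:factor} to $\widebar U(rf)=\widebar T(r)U(f)$ and use that $\widebar T$, being induced by a regular set isomorphism, is pointwise sign--preserving. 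To make it rigorous you should write $f_i\topp2=\widebar T(r_i)U(f)$ with $f$ of full support and $r_i=f_i\topp1/f$, note that on a conflict set $\{f_1\topp1>0>f_2\topp1\}$ of positive $\mu_1$--measure the ratios $r_1,r_2$ have opposite signs, and conclude (since $\widebar T(r^+-r^-)=\widebar T(r^+)-\widebar T(r^-)$ with disjoint supports) that $f_1\topp2$ and $f_2\topp2$ have opposite signs on the image of that set under $T$, contradicting their nonnegativity. That is the whole proof.

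The alternative you advertise as ``the cleanest route,'' however, does not work, and you should not replace the isometry argument by it. The claim that a conflicting--sign piece forces $(a_1,a_2)\mapsto\|a_1f_1\topp1+a_2f_2\topp1\|_{\la(S_1,\mu_1)}$ to be non--monotone on the positive quadrant is false: take $f_1\topp1=\ind_{[0,1]}-\ind_{[1,2]}$ and $f_2\topp1=\ind_{[0,2]}$; then $\|a_1f_1\topp1+a_2f_2\topp1\|_\alpha^\alpha=(a_1+a_2)^\alpha+|a_2-a_1|^\alpha$ is nondecreasing in each variable on the positive quadrant for every $\alpha\geq1$, despite the sign conflict on $[1,2]$. (For $\alpha=2$ the proposition itself fails --- an orthonormal pair with a sign conflict is linearly isometric to a pair of disjoint indicators --- so any correct proof must use $0<\alpha<2$ in an essential way, which the monotonicity heuristic does not.) For $\alpha<1$ a different problem appears: your plan to compare derivatives ``over the conflict set'' presupposes that the global identity \eqref{eq:thm1sum} can be localized to a subset of $S_1$ and a matching subset of $S_2$, but that correspondence of subsets is exactly what the regular set isomorphism of Theorem~\ref{thm:factor} supplies and cannot be extracted from the norm identities by elementary estimates; the negative contribution of the conflict set to the derivative can always be swamped by the rest of the space. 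Keep route one.
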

The proof is given in the Appendix. We will call~\eqref{cond:associable2} the \textit{associable condition}.
As a consequence, in the next section we will see that there are S$\alpha$S processes, which cannot be associated to any $\alpha$--Fr\'echet process.
\section{Association of Max-- and Sum--Stable Processes}\label{sec:association}
In this section, by essentially applying Proposition~\ref{prop:extendedRatio2} and~\ref{prop:extendedRatio1}, we associate an S$\alpha$S process to every $\alpha$-Fr\'echet process. The associated processes will be shown to have similar properties. However, we will also see that not all the S$\alpha$S processes can be associated to $\alpha$--Fr\'echet processes. We conclude with several examples.
First, inspired by the similarity in Representations~\eqref{rep:integralRep} and~\eqref{rep:extremalRep}, we introduce the following:
\begin{Def}[Associated spectral representations]\label{def:association}
We say that an S$\alpha$S process $\indt X$ and an $\alpha$--Fr\'echet process $\indt Y$ are \textit{associated}, if there exist $\indt f\subset\laps$ such that:
\[
\indt X \defd \bccbb{\int_{S}f_t d\mas}_{t\in T}\ \mbox{and}\quad \indt Y \defd \bccbb{\Eintt_Sf_td\mam}_{t\in T}\,.
\]
In this case, we say $\indt X$ and $\indt Y$ are associated by $\indt f$.
\end{Def}
\noindent The following result shows the consistency of Definition~\ref{def:association}, i.e., the notion of association is independent of the choice of spectral functions. 
\begin{Thm}\label{thm:association}
Suppose an S$\alpha$S process $\indt X$ and an $\alpha$--Fr\'echet process $\indt Y$ are associated by $\indt {f\topp1}\subset\lapdo$. Then, $\indt {f\topp2}\subset\lapim$ is a spectral representation of $\indt X$, if and only if it is a spectral representation of $\indt Y$. Namely,
\equh\label{eq:eintts1}
\Big\{\int_{S_1}f_t\topp1 d\mas\topp1\Big\}_{t\in T} \defd \Big\{\int_{S_2}f_t\topp2 d\mas\topp2\Big\}_{t\in T}
\Longleftrightarrow\Big\{\Eintt_{S_1}f_t\topp1 d\mam\topp1\Big\}_{t\in T} \defd \Big\{\Eintt_{S_2}f_t\topp2 d\mam\topp2\Big\}_{t\in T}\,,
\eque
where $\mas\topp i$ and $\mam\topp i$ are S$\alpha$S random measures and $\alpha$--Fr\'echet random sup--measures, respectively, on $S_i$ with control measure $\mu_i,i=1,2$.
\end{Thm}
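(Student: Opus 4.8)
The plan is to reduce the equivalence~\eqref{eq:eintts1} to Theorem~\ref{thm:1} by translating each side into a statement about norms of linear (resp.\ max--linear) combinations of spectral functions. Recall from~\eqref{eq:3} that the f.d.d.\ of the S$\alpha$S process $\{\int_{S_i} f_t\topp i\,d\mas\topp i\}_{t\in T}$ are completely determined by the quantities $\bnn{\sum_{j=1}^m a_j f_{t_j}\topp i}_{\la(S_i,\mu_i)}$ over all finite choices of $a_j\in\mathbb R$, $t_j\in T$; similarly, by~\eqref{eq:4}, the f.d.d.\ of $\{\Eintt_{S_i} f_t\topp i\,d\mam\topp i\}_{t\in T}$ are determined by $\bnn{\bigvee_{j=1}^m a_j f_{t_j}\topp i}_{\lap(S_i,\mu_i)}$ over all finite choices of $a_j\geq 0$, $t_j\in T$. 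Hence the left--hand side of~\eqref{eq:eintts1} holds if and only if
\[
\bnn{\sum_{j=1}^m a_j f_{t_j}\topp 1}_{\la(S_1,\mu_1)} = \bnn{\sum_{j=1}^m a_j f_{t_j}\topp 2}_{\la(S_2,\mu_2)}\,,\quad\mbox{for all } a_j\in\mathbb R,\ t_j\in T,\ m\in\mathbb N\,,
\]
and the right--hand side holds if and only if the analogous identity with $\sum$ replaced by $\bigvee$ and $a_j\in\mathbb R$ replaced by $a_j\geq 0$ holds.

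First I would fix an arbitrary finite subset $t_1,\dots,t_m\in T$ and apply Theorem~\ref{thm:1} to the two collections $f_{t_1}\topp i,\dots,f_{t_m}\topp i\in\lap(S_i,\mu_i)$, $i=1,2$ (which are indeed nonnegative since $\indt{f\topp1}\subset\lapdo$ and, in the forward direction, $\indt{f\topp2}\subset\lapim$ by hypothesis). Theorem~\ref{thm:1} gives exactly the equivalence between the $\sum$--identity and the $\bigvee$--identity for that finite index set. Since this holds for every finite subset of $T$, the two characterizations above are equivalent, and therefore the left-- and right--hand sides of~\eqref{eq:eintts1} are equivalent. That establishes the theorem.

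The only genuine subtlety — and the step I would write most carefully — is the hypothesis that $\indt{f\topp2}$ consists of nonnegative functions, which is needed before Theorem~\ref{thm:1} can be invoked. This is automatic: if $\indt{f\topp2}$ is assumed to be a spectral representation of the $\alpha$--Fr\'echet process $\indt Y$ in the sense of~\eqref{rep:extremalRep}, then by definition $\indt{f\topp2}\subset\lapim$. If instead one starts from the assumption that $\indt{f\topp2}$ represents the S$\alpha$S process $\indt X$, one must argue that it may be taken nonnegative — but here Proposition~\ref{prop:associable} applies: since $\indt X$ is already known to be representable by the nonnegative family $\indt{f\topp1}$, the associable condition~\eqref{cond:associable2} holds for $\indt{f\topp1}$, hence (the relevant norms being unchanged) any representation $\indt{f\topp2}$ of the same process also satisfies it, so one may replace $f_{t}\topp2$ by $|f_{t}\topp2|$ without affecting either side of~\eqref{eq:eintts1}. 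With that reduction in place, Theorem~\ref{thm:1} does all the remaining work, and no further computation is required.
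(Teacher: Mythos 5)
Your proposal is correct and follows essentially the same route as the paper's proof: translate both sides of~\eqref{eq:eintts1} into the norm identities via~\eqref{eq:3} and~\eqref{eq:4}, then apply Theorem~\ref{thm:1} to each finite collection $f_{t_1}\topp i,\dots,f_{t_m}\topp i$. The closing discussion about nonnegativity of $\indt{f\topp2}$ is unnecessary, since the theorem's hypothesis already places $\indt{f\topp2}$ in $\lapim$, but it does no harm.
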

\begin{proof}
First note that by~\eqref{eq:3}, the l.h.s.\ of~\eqref{eq:eintts1} is equivalent to:
\[
\bnn{\sum_{j=1}^na_jf_{t_j}\topp1}_{\la(S_1,\mu_1)} = \bnn{\sum_{j=1}^na_jf_{t_j}\topp2}_{\la(S_2,\mu_2)}\,\forall a_j\in\mathbb R\,, t_j\in T, 1\leq j\leq n, \forall n\in\mathbb N\,,
\]
which, by Theorem~\ref{thm:1}, is equivalent to:
\[
\bnn{\bigvee_{j=1}^na_jf_{t_j}\topp1}_{\lap(S_1,\mu_1)} = \bnn{\bigvee_{j=1}^na_jf_{t_j}\topp2}_{\lap(S_2,\mu_2)}\,\forall a_j\geq 0\,, t_j\in T, 1\leq j\leq n, \forall n\in\mathbb N\,.
\]
Since $t_1,\dots,t_n$ are arbitrary, the relation above, by~\eqref{eq:4}, is equivalent to the r.h.s.\ of~\eqref{eq:eintts1}.
\end{proof}
\noindent It follows from Theorem~\ref{thm:association} that, for the associated processes, it suffices to work on any spectral representations. The next result shows that the associated processes would be simultaneously stationary and self--similar. Here we assume $T = \mathbb R$ or $\mathbb Z$.
\begin{Coro}\label{coro:associatedStationary}
Suppose an S$\alpha$S process $\indt X$ and an $\alpha$--Fr\'echet process $\indt Y$ are associated. Then, 

\itemnumber i $\indt X$ is stationary if and only if $\indt Y$ is stationary..

\itemnumber {ii} $\indt X$ is self--similar with exponent $H$, if and only if $\indt Y$ is self--similar with exponent $H$.
\end{Coro}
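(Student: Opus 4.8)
The plan is to reduce both parts of the corollary to Theorem~\ref{thm:association} by exhibiting, for each shift or scaling operation on the index set, a single family of spectral functions that simultaneously represents the transformed process in both the sum-- and max-- worlds. Concretely, suppose $\indt X$ and $\indt Y$ are associated by $\indt f\subset\lap(S,\mu)$. For part (i), stationarity of $\indt X$ means that for every $h\in T$ the shifted process $\{X_{t+h}\}_{t\in T}$ has the same f.d.d.\ as $\indt X$; since $\{X_{t+h}\}_{t\in T}$ is represented by the spectral functions $\{f_{t+h}\}_{t\in T}$, stationarity is equivalent (via~\eqref{eq:3}) to
\[
\bnn{\sum_{j=1}^n a_j f_{t_j+h}}_{\la(S,\mu)} = \bnn{\sum_{j=1}^n a_j f_{t_j}}_{\la(S,\mu)}\,,\quad \forall a_j\in\mathbb R,\ t_j\in T,\ n\in\mathbb N,\ h\in T\,.
\]
By Theorem~\ref{thm:1} (applied with $(S_1,\mu_1)=(S_2,\mu_2)=(S,\mu)$, $f_j\topp1=f_{t_j+h}$ and $f_j\topp2=f_{t_j}$) this holds if and only if the corresponding identity with $\sum$ replaced by $\bigvee$ and $a_j\geq0$ holds, which by~\eqref{eq:4} is exactly stationarity of $\indt Y$. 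So (i) follows immediately.

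For part (ii), I would argue in the same spirit, the only extra bookkeeping being the multiplicative scaling factor. Self--similarity with exponent $H$ means $\{X_{ct}\}_{t\in T}\eqd\{c^H X_t\}_{t\in T}$ for all $c>0$ (with $c$ ranging over positive integers when $T=\mathbb Z$, or all $c>0$ when $T=\mathbb R$). The left side is represented by $\{f_{ct}\}_{t\in T}$; the right side, using (i) of the S$\alpha$S integral properties in Section~\ref{sec:prelim}, has scale coefficients $c^H\nn{f_t}_\alpha$, so the scaled process $\{c^H X_t\}$ is represented by $\{c^H f_t\}_{t\in T}$. Hence self--similarity of $\indt X$ is equivalent to
\[
\bnn{\sum_{j=1}^n a_j f_{ct_j}}_{\la(S,\mu)} = \bnn{\sum_{j=1}^n a_j c^H f_{t_j}}_{\la(S,\mu)}\,,\quad\forall a_j\in\mathbb R,\ t_j\in T,\ n\in\mathbb N,\ c>0\,.
\]
Applying Theorem~\ref{thm:1} with $f_j\topp1 = f_{ct_j}$ and $f_j\topp2 = c^H f_{t_j}$ (both nonnegative, since $f\subset\lap$), this is equivalent to the same identity with $\sum\mapsto\bigvee$ and $a_j\geq0$, which by~\eqref{eq:4} (noting that $c^H$ pulls through the $\bigvee$ and through the integral as $c^{\alpha H}$) is precisely self--similarity of $\indt Y$ with exponent $H$.

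I do not expect a serious obstacle here; the corollary is essentially a packaging of Theorem~\ref{thm:association} once one observes that ``stationary'' and ``self--similar'' are both statements about equality of f.d.d.\ between a process and a reparametrized/rescaled copy of it, and that such an equality is, by~\eqref{eq:3}--\eqref{eq:4}, an identity of $\la$-norms of linear combinations of spectral functions of the shape to which Theorem~\ref{thm:1} applies. The one point deserving a line of care is the rescaling in (ii): one must record that multiplying a spectral function by $c^H$ multiplies the corresponding S$\alpha$S (resp.\ $\alpha$--Fr\'echet) variable's scale coefficient by $c^H$, so that $\{c^H f_t\}$ is a legitimate spectral representation of $\{c^H X_t\}$ (resp.\ $\{c^H Y_t\}$) in both worlds — this is exactly property (i) of the stable and extremal integrals in Section~\ref{sec:prelim}. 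A subtlety worth a remark is whether one wants the association itself to be preserved (not just stationarity/self-similarity transferred), but since Theorem~\ref{thm:association} guarantees that any common spectral representation works, and $\{f_t\}$ is such, the argument is closed.
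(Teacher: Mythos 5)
Your proof is correct and follows essentially the same route as the paper: both reduce stationarity and self--similarity to the equivalence of sum-- and max--norm identities for a shifted (resp.\ rescaled) family of spectral functions, the paper invoking Theorem~\ref{thm:association} where you invoke Theorem~\ref{thm:1} directly, which is the same thing one layer down. Your rescaled representation $\{c^H f_t\}$ in part (ii) is the right one (justified by linearity/max--linearity of the integrals, i.e.\ property (iii) rather than (i) of Section~\ref{sec:prelim}), and in fact it is cleaner than the paper's choice of exponent there.
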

\begin{proof}
Suppose $\indt X$ and $\indt Y$ are associated by $\indt f\subset\laps$. 
\noindent{\it (i)} For any $h\in T$, letting $g_t = f_{t+h}\,,\forall t\in T$, by stationarity of $\indt X$, we obtain $\indt g$ as another spectral representation. Namely,
\[
\bccbb{\int_Sf_td\mas}_{t\in T} \eqd \bccbb{\int_Sg_td\mas}_{t\in T}\,.
\]
By Theorem~\ref{thm:association}, the above statement is equivalent to 
\[
\bccbb{\Eintt_Sf_td\mam}_{t\in T} \eqd \bccbb{\Eintt_Sg_td\mam}_{t\in T}\,,
\]
which is equivalent to the fact that $\indt Y$ is stationary.

\noindent{\it (ii)} If $\indt X$ is self--similar with exponent $H$, then by definition, for any $a>0$, we have
\[
(X_{at_1},\dots,X_{at_n}) \defd \left(a^{H}X_{t_1},\dots,a^{H}X_{t_n}\right)\,.
\]
Set $g_t(s) = a^{H/\alpha}f_{t/a}(s)$. The same argument as in part {(i)} yields the result.
\end{proof}
It is obvious that not all $\alpha$--Fr\'echet processes can be associated to S$\alpha$S processes, as the latter requires $0<\alpha< 2$ while the former can take any $\alpha> 0$. On the other hand,
neither can all S$\alpha$S processes be associated to $\alpha$--Fr\'echet processes. 
This is because, not all S$\alpha$S processes have nonnegative spectral representations. For an S$\alpha$S process $\indt X$ with spectral representation $\indt f$ to have an associated $\alpha$--Fr\'echet process, a necessary and sufficient condition is that for any $t_1,\dots,t_n\in T$, $f_{t_1},\dots,f_{t_n}$ satisfy the associable condition~\eqref{cond:associable2}. We say such S$\alpha$S processes are \textit{max--associable}. Now, Proposition~\ref{prop:associable} becomes:
\begin{Thm}\label{thm:associable}
Any S$\alpha$S process $\indt X$ with representation~\eqref{rep:integralRep} is max--associable, if and only if for all $t_1,t_2\in T$, 
\equh\label{eq:ft1ft2}
f_{t_1}(s)f_{t_2}(s)\geq 0\,,\mu\ae\,.
\eque
\end{Thm}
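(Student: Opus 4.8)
The plan is to read off Theorem~\ref{thm:associable} from Proposition~\ref{prop:associable} together with the characterization~\eqref{eq:3} of the finite--dimensional distributions of an S$\alpha$S process. First I would unwind the definition of max--associability: by Definition~\ref{def:association}, $\indt X$ is max--associable exactly when it admits a nonnegative spectral representation, i.e.\ when there is a family $\indt g\subset\lap(S',\mu')$, on some measure space $(S',\mu')$, with $\indt X\eqd\{\int_{S'}g_t\,d\mas\}_{t\in T}$; and by~\eqref{eq:3} the latter holds if and only if
\[
\bnn{\sum_{j=1}^n a_j f_{t_j}}_{\la(S,\mu)}=\bnn{\sum_{j=1}^n a_j g_{t_j}}_{\la(S',\mu')}\quad\mforall n\in\mathbb N,\ t_1,\dots,t_n\in T,\ a_1,\dots,a_n\in\mathbb R.
\]
So max--associability amounts to the existence, for every finite collection $f_{t_1},\dots,f_{t_n}$ of spectral functions, of nonnegative functions realizing the same norms as in~\eqref{eq:thm1sum}, with the nonnegative functions moreover fitting together into one global family.

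For the ``if'' direction I would assume~\eqref{eq:ft1ft2}. Then for any $t_1,\dots,t_n\in T$ the pairwise products $f_{t_i}f_{t_j}$ are nonnegative $\mu\ae$ for all $1\le i,j\le n$; that is, the associable condition~\eqref{cond:associable2} holds for the collection $f_{t_1},\dots,f_{t_n}$. Proposition~\ref{prop:associable} then applies and, what is essential here, furnishes the explicit witnesses $f^{(2)}_{t_i}=|f_{t_i}|$ on the \emph{same} space $(S,\mu)$. Hence the single family $\{|f_t|\}_{t\in T}$ simultaneously matches the norms of $\indt f$ over every finite subcollection, so by~\eqref{eq:3} it is a bona fide nonnegative spectral representation of $\indt X$. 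Consequently $\indt Y\defd\{\Eintt_S|f_t|\,d\mam\}_{t\in T}$ is an $\alpha$--Fr\'echet process associated to $\indt X$ in the sense of Definition~\ref{def:association}, and so $\indt X$ is max--associable.

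For the ``only if'' direction I would suppose $\indt X$ is max--associable, fix an arbitrary pair $t_1,t_2\in T$, and specialize the reduction above to $n=2$: there exist nonnegative $f^{(2)}_{t_1},f^{(2)}_{t_2}$ (on some measure space) with $\nn{a_1f_{t_1}+a_2f_{t_2}}_\alpha=\nn{a_1f^{(2)}_{t_1}+a_2f^{(2)}_{t_2}}_\alpha$ for all $a_1,a_2\in\mathbb R$. The ``only if'' half of Proposition~\ref{prop:associable}, applied with $n=2$, then forces $f_{t_1}(s)f_{t_2}(s)\ge0$, $\mu\ae$, which is~\eqref{eq:ft1ft2}.

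I do not anticipate a genuine obstacle: the substance is already contained in Proposition~\ref{prop:associable} and in the dictionary~\eqref{eq:3} between equality of norms and equality of f.d.d.'s. The only point to handle with a little care is the quantifier bookkeeping --- that~\eqref{eq:ft1ft2}, a condition on all pairs in $T$, is exactly what makes the associable condition~\eqref{cond:associable2} hold on every finite subcollection, and that the canonical choice $g=|f|$ supplied by Proposition~\ref{prop:associable} keeps the underlying measure space fixed, so that it glues into one global nonnegative representation rather than a separate one for each finite--dimensional marginal.
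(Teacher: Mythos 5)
Your proof is correct and takes essentially the same route as the paper, which derives Theorem~\ref{thm:associable} as an immediate consequence of Proposition~\ref{prop:associable} via the remark that the canonical witness $\{|f_t|\}_{t\in T}$ on the same space $(S,\mu)$ serves as a single global nonnegative spectral representation. Your write-up merely makes explicit the quantifier bookkeeping (pairwise condition $\Rightarrow$ finite-collection associable condition, and the $n=2$ specialization for the converse) that the paper leaves as a one-line ``Indeed, by Proposition~\ref{prop:associable}\dots'' remark.
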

\noindent Indeed, by Proposition~\ref{prop:associable} for any max--associable spectral representation $\indt f$, $\{|f_t|\}_{t\in T}$ is also a spectral representation for the same process. Clearly, if the spectral functions are nonnegative, then the S$\alpha$S processes are max--associable. We give two simple examples next. 
\begin{Rem}
All the examples in this section are well studied S$\alpha$S processes. We do not however list their properties in this paper. Many of the resulting associated $\alpha$--Fr\'echet processes are new, to the best of our knowledge. 
However, the association does not provide a complete picture of the probabilistic properties of these new $\alpha$--Fr\'echet processes. Their detailed studies present interesting problems for future research, which fall beyond the scope of this work.
\end{Rem}
\begin{Example}[Association of mixed fractional motions]
Consider the self--similar S$\alpha$S processes $\{X_t\}_{t\in\mathbb R_+}$ with the following representations
\equh\label{rep:mixedFractionalMotions}
\{X_t\}_{t\in \mathbb R_+} \eqd \bccbb{\int_E\int_0^\infty t^{H-\frac1\alpha}g\left(x,\frac ut\right)\mas(dx,du)}_{t\in \mathbb R_+}\,, H\in(0,\infty)\,,
\eque
where $(E,\calE,\nu)$ is a standard Lebesgue space, $\mas$ is an S$\alpha$S random measure on $X\times\mathbb R_+$ with control measure $m(dx,du) = \nu(dx)du$ and $g\in\la(E\times\mathbb R_+,m)$. Such processes are called \textit{mixed fractional motions} (see~\citet{burnecki98spectral}). When $g\geq 0$ a.e., the process $\{X_t\}_{t\in \mathbb R_+}$ is max--associable. The Corollary~\ref{coro:associatedStationary} implies the associated $\alpha$--Fr\'echet process is $H$--self--similar.
\end{Example}
\begin{Example}[Association of Chentzov S$\alpha$S Random Fields]
Recall that $\indtrn X$ is a Chentzov S$\alpha$S random field, if 
\[
\indtrn X  \equiv \{\mas(V_t)\}_{t\in \mathbb R^n} \eqd \bccbb{\int_S\ind_{V_t}(u)\mas(du)}_{t\in \mathbb R^n}\,.
\]
Here, $0<\alpha<2$, $(S,\mu)$ is a measure space and  $V_t, t\in  \mathbb R^n$ is a family of measurable sets such that $\mu(V_t)<\infty$ for all $t\in\mathbb R^n$ (see Ch.~8 in~\citet{samorodnitsky94stable}). Since $\ind_{V_t}(u)\geq 0$, all Chentzov S$\alpha$S random fields are max--associable.
\end{Example}
To conclude this section, we will show that there are S$\alpha$S processes that cannot be associated to any $\alpha$--Fr\'echet processes. In particular, recall that, the S$\alpha$S processes with stationary increments (zero at $t = 0$) characterized by dissipative flows were shown in~\citet{surgailis98mixing} to have representation
\equh\label{eq:XttR}
\{X_t\}_{t\in \mathbb R}\eqd \bccbb{\int_E\int_\mathbb R(G(x,t+u) - G(x,u))\mas(dx,du)}_{t\in \mathbb R}\,.
\eque
Here, $(E,\calE,\nu)$ is a standard Lebesgue space, $\mas,\alpha\in(0,2)$ is an S$\alpha$S random measure with control measure $m(dx,du) = \nu(dx)du$ and $G:E\times \mathbb R\to\mathbb R$ is a measurable function such that, for all $t\in \mathbb R$,
\[
G_t(x,u) = G(x,t+u)-G(x,u)\,, x\in E, u\in\mathbb R
\]
belongs to $L^\alpha(E\times\mathbb R,m)$. The process $\indtr X$ in~\eqref{eq:XttR} is called a \textit{mixed moving average with stationary increments}. Examples~\ref{example:3} and~\ref{example:4} show that not all such processes are max--associable. The following result provides a
partial characterization of the max--associable S$\alpha$S processes $\indt X$, which have the representation \eqref{eq:XttR}.
We shall suppose that $E$ is equipped with a metric $\rho$ and endow $E\times\mathbb R$ with the product topology.

\begin{Prop}\label{prop:4}
Consider an S$\alpha$S process $\indtr X$ with representation~\eqref{eq:XttR}. 
Suppose there exists a closed set $\calN\subset E\times\mathbb R$, such that $m(\calN) = 0$ and the function $G$ is continuous at all $(x,u)\in \calN^c\defe E\times\mathbb R\setminus \calN$, w.r.t.\ the product topology. Then, $\indtr X$ is max--associable, if and only if
\equh\label{eq:Gxu}
G(x,u)  = f(x)\ind_{A_x}(u) + c(x), \mmon \calN^c\,.
\eque
\end{Prop}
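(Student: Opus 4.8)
The plan is to reduce the max-associability of $\{X_t\}_{t\in\mathbb R}$ to a pointwise condition on the increment functions $G_t(x,u) = G(x,t+u) - G(x,u)$ and then solve the resulting functional equation using the continuity hypothesis. By Theorem~\ref{thm:associable}, $\{X_t\}_{t\in\mathbb R}$ is max-associable if and only if $G_{t_1}(x,u)\,G_{t_2}(x,u)\geq 0$ for $m$-a.e.\ $(x,u)$, for all $t_1,t_2\in\mathbb R$. Applying the continuity of $G$ on $\calN^c$ (and $m(\calN)=0$), this a.e.\ statement upgrades to a genuinely pointwise statement: for every fixed $x$ with the slice $\calN^c$ dense enough, the function $u\mapsto G_t(x,u)$ does not change sign, and moreover for different $t_1,t_2$ the sign is consistent. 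So the first step is to fix a generic $x\in E$ and study the one-dimensional function $u\mapsto G(x,u)$ (call it $h(u)$, continuous off a closed null set in $u$), subject to: for all $t$, $u\mapsto h(u+t)-h(u)$ has constant sign (in $\{\geq 0\}$ or $\{\leq 0\}$), with that sign possibly depending on $t$ but consistently across all pairs $t_1,t_2$.

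The core of the argument is the claim that a (suitably continuous) function $h$ on $\mathbb R$ whose every translate-difference $h(\cdot+t)-h(\cdot)$ is single-signed must be of the form $h(u) = f\cdot\ind_{A}(u) + c$ where $A$ is a half-line (or its complement), i.e.\ $h$ takes at most two values, with the level sets being half-lines. Here is the heuristic: single-signedness of $h(u+t)-h(u)$ for a fixed $t>0$ says $h$ is either ``$t$-monotone'' up or down. Requiring this for \emph{all} $t$ with consistent signs forces $h$ to be monotone, or to look like a single jump. If $h$ is genuinely monotone and also bounded in $L^\alpha$ after subtracting the increment structure — actually the key constraint comes from $G_t \in L^\alpha(E\times\mathbb R,m)$: since $\int_E \int_{\mathbb R} |G(x,t+u)-G(x,u)|^\alpha\,du\,\nu(dx) < \infty$, the function $u \mapsto h(u+t) - h(u)$ is in $L^\alpha(\mathbb R)$ for a.e.\ $x$, hence tends to $0$ along a sequence; combined with single-signedness this forces $h(u+t)-h(u)$ to be integrable and of one sign, which severely restricts $h$. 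Iterating over a dense set of $t$'s and using continuity off $\calN^c$, one concludes that $h$ is eventually constant at $\pm\infty$ and makes a single monotone ``step''; the $L^\alpha$ control on the difference then forces that step to be a genuine jump (a plateau-to-plateau transition), giving $h(u) = f(x)\ind_{A_x}(u) + c(x)$ for a half-line or complemented half-line $A_x$. The converse direction — that any $G$ of the form \eqref{eq:Gxu} satisfies the associable condition — is a direct check: $G_t(x,u) = f(x)(\ind_{A_x}(u+t) - \ind_{A_x}(u))$, and for $A_x$ a half-line the indicator difference $\ind_{A_x}(\cdot+t)-\ind_{A_x}(\cdot)$ is single-signed (it is the indicator of an interval, times $\sign(t)$), and the sign depends only on $\sign(t)$, hence is consistent across all $(t_1,t_2)$; multiplying by the $x$-independent-sign factor $f(x)^2\geq 0$ preserves the product inequality.

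I would organize the proof as: (1) translate max-associability into the pointwise increment-sign condition via Theorem~\ref{thm:associable} and an a.e.-to-everywhere upgrade using the hypothesis on $\calN$; (2) reduce to the one-dimensional problem by fixing $x$ and noting that Fubini gives, for $\nu$-a.e.\ $x$, that the slice function $h=G(x,\cdot)$ is continuous off a closed null subset of $\mathbb R$, lies (in its increments) in $L^\alpha(\mathbb R)$, and has all translate-differences single-signed with consistent signs; (3) prove the structure lemma that such an $h$ equals $f\ind_A + c$ with $A$ a half-line, using monotonicity/jump analysis plus the $L^\alpha$ integrability of increments to kill any non-jump monotone part; (4) reassemble in $x$, checking measurability of $x\mapsto f(x)$, $x\mapsto c(x)$ and $x\mapsto A_x$ (the half-line endpoint is a measurable function of $x$), which holds because these are determined by, e.g., $\mathrm{ess\,sup}$ and $\mathrm{ess\,inf}$ of $G(x,\cdot)$ and the location of the jump; (5) prove the converse by the direct computation above.

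The main obstacle I expect is step (3): rigorously pinning down that ``all translate-differences single-signed with consistent signs, plus $L^\alpha$-integrable increments, plus continuity off a null set'' forces the two-valued half-line structure, rather than merely forcing monotonicity. One has to rule out, for instance, a continuous strictly increasing $h$ (whose increments are positive but need not be in $L^\alpha$ — the $L^\alpha$ hypothesis is exactly what excludes this, since $\int_{\mathbb R}|h(u+t)-h(u)|^\alpha du < \infty$ with $h(u+t)-h(u) > 0$ and monotone-ish forces it to vanish at $\pm\infty$, which for a monotone $h$ means $h$ is eventually constant at both ends, and then the ``transition'' between the two constant levels, being monotone with $L^\alpha$ increment, must actually be a jump — a convexity/Jensen or a telescoping-sum argument along $u, u+t, u+2t, \dots$ makes this precise). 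Handling the measurable dependence on $x$ in step (4) is routine but must be done carefully since the null set $\calN$ and hence the ``good'' slices depend on $x$; this is where one uses that $\calN$ is closed so that $\calN^c$ is open and the slices $\calN^c_x = \{u : (x,u)\notin\calN\}$ are open, and that $G$ restricted to $\calN^c$ is jointly continuous.
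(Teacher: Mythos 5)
There is a genuine gap, and it stems from a misreading of the associability condition. By Theorem~\ref{thm:associable}, max--associability is equivalent to $G_{t_1}(x,u)\,G_{t_2}(x,u)\geq 0$ for $m$-a.e.\ $(x,u)$ and all $t_1,t_2$; this is a condition \emph{at each fixed base point} $(x,u)$, comparing the signs of the two increments $G(x,u+t_1)-G(x,u)$ and $G(x,u+t_2)-G(x,u)$ taken from the \emph{same} $(x,u)$. It does not assert, as you claim in your step (1), that ``$u\mapsto G_t(x,u)$ does not change sign'' as $u$ varies. The correct pointwise consequence is that for (almost) every good $(x,u)$ the map $t\mapsto G(x,u+t)-G(x,u)$ is single--signed, i.e.\ $G(x,u)$ is either a global maximum or a global minimum of the slice $v\mapsto G(x,v)$ on $\calN^c$; hence the slice takes at most two values, which is exactly \eqref{eq:Gxu} with an \emph{arbitrary} set $A_x$. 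That is the entire content of the necessity direction (the paper makes it rigorous by choosing three points $u_1,u_2,u_3$ with $G(x,u_1)<G(x,u_2)<G(x,u_3)$ and using continuity off the closed null set $\calN$ to produce a ball of positive measure around the middle point on which $G_{u_1-u_2}G_{u_3-u_2}<0$). No monotonicity analysis, no $L^\alpha$ decay of increments, and no half--line structure is involved.

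Your misreading makes both directions of your argument fail against the actual statement. For the ``only if'' part, your structure lemma in step (3) would conclude that $A_x$ must be a half--line, which is strictly stronger than \eqref{eq:Gxu} and in fact false: take $G(x,u)=\ind_{[0,1]}(u)$. Every point $u$ is either a global max or a global min of this slice, so $G_{t_1}(x,u)G_{t_2}(x,u)\geq 0$ everywhere and the process is max--associable, yet the level set $[0,1]$ is not a half--line. For the ``if'' part, you only verify the associable condition when $A_x$ is a half--line, whereas \eqref{eq:Gxu} must be shown sufficient for general $A_x$; the correct observation (used in the paper) is that for fixed $(x,u)$ the quantity $\ind_{A_x}(u+t)-\ind_{A_x}(u)$ ranges, as $t$ varies, over a two--element set containing $0$, so any product of two such increments is a square or zero. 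The $L^\alpha$ integrability of $G_t$ plays no role in the characterization, and the half--line/monotonicity machinery you anticipate as the ``main obstacle'' is solving a problem that does not arise.
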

\begin{proof}
By Theorem~\ref{thm:associable}, $\indtr X$ is max--associable, if and only if for all $t_1,t_2\in\mathbb R$,
\equh\label{cond:associable3}
G_{t_1}(x,u)G_{t_2}(x,u) = (G(x,t_1+u)-G(x,u))(G(x,t_2+u)-G(x,u))\geq 0\,,m \ae\ \ \ (x,u)\in E\times\mathbb R\,.
\eque
First, we show the `if' part. Define $\widetilde G(x,u) \defe G(x,u)$ (given by~\eqref{eq:Gxu}) on $\calN^c$ and $\widetilde G(x,u) \defe f(x)\ind_{A_x}(u)+c(x)$ on $\calN$ (if $A_x$ and $c(x)$ is not defined, then set $\widetilde G(x,u) = 0$). Set $\widetilde G_t(x,u) = \widetilde G(x,u+t) - \widetilde G(x,u)$. 
Note that $\widetilde G_t(x,u)$ is another spectral representation of $\indtr X$ and for all $(x,u)$, $\bccbb{\ind_{A_x}(u+t)-\ind_{A_x}(u)}$ can take at most 2 values, one of which is $0$. This observation implies~\eqref{cond:associable3} with $G_t(x,u)$ replaced by $\widetilde G_t(x,u)$, whence $\indtr X$ is max--associable.

Next, we prove the `only if' part. We show that~\eqref{cond:associable3} is violated, if $G(x,u)$ takes more than 2 different values on $(\{x\}\times\mathbb R)\cap\calN^c$ for some $x\in X$. Suppose there exist $\exists x\in E, u_i\in \mathbb R$ such that $(x,u_i)\in\calN^c$ and $g_{xi}\defe G(x,u_i)$ are mutually different, for $i = 1,2,3$. Indeed, without loss of generality we may suppose that $g_{x1}<g_{x2}<g_{x3}$. Then, by the continuity of $G$, there exists $\epsilon>0$ such that $B_i\defe B(x,\epsilon) \times (u_i-\epsilon,u_i+\epsilon)\,, i = 1,2,3$ are disjoint sets with $B(x,\epsilon) \defe\{y\in E:\rho(x,y)<\epsilon\}$, $\rho$ is the metric on $E$ and 
\equh\label{eq:B1N}
\sup_{B_1\cap\calN^c}G(x,u)<\inf_{B_2\cap\calN^c}G(x,u)\leq \sup_{B_2\cap\calN^c}G(x,u)<\inf_{B_3\cap\calN^c}G(x,u)\,.
\eque
Put $t_1 = u_1-u_2$ and $t_2 = u_3-u_2$. Inequality~\eqref{eq:B1N} implies that $G_{t_1}(x,u)G_{t_2}(x,u)< 0$ on $B_2\cap\calN^c$. This, in view of Theorem~\ref{thm:associable}, contradicts the max--associability. Therefore, for all $x\in E$, $G(x,u)$ can take at most two values on $\calN^c$, which implies~\eqref{eq:Gxu}.
\end{proof}
\noindent We give two classes of S$\alpha$S processes, which cannot be associated to any $\alpha$--Fr\'echet processes, according to Proposition~\ref{prop:4}.

\begin{Example}[Non--associability of linear fractional stable motions]\label{example:3}
The {\it linear fractional stable motions} (see Ch.~7.4 in~\citet{samorodnitsky94stable}) have the following spectral representations:
\[
\indtr X \eqd \bccbb{\int_{\mathbb R}\bccbb{a\left((t+u)_+^{H-1/\alpha} - u_+^{H-1/\alpha}\right) + b\left((t+u)_-^{H-1/\alpha} - u_-^{H-1/\alpha}\right)}\mas(du)}_{t\in \mathbb R}\,.
\]
Here $H\in(0,1)$, $\alpha\in(0,2)$, $H\neq 1/\alpha$, $a,b\in\mathbb R$ and $|a|+|b| > 0$. By Proposition~\ref{prop:4}, these processes are not max--associable. 
\end{Example}
\begin{Example}[Non--associability of Telecom processes]\label{example:4}
The Telecom process offers an extension of fractional Brownian motion consistent with heavy--tailed fluctuations. It is a large scale limit of renewal reward processes and it can be obtained by choosing the distribution of the rewards accordingly (see~\citet{levy00renewal} and~\citet{pipiras04slow}). A Telecom process $\indtr X$ has the following representation
\[
\indtr X \eqd \bccbb{\int_{\mathbb R}\int_{\mathbb R}e^{(H-1)/\alpha}\left(F(e^s(t+u)) - F(e^su)\right)\mas(ds,du)}_{t\in \mathbb R}\,,
\]
where $1<\alpha<2$, $1/\alpha<H<1$, $F(z) = (z\wedge 0 + 1)_+\,, z\in\mathbb R$ and the S$\alpha$S random measure $\mas$ is with control measure $m_\alpha(ds,du) = dsdu$. By Proposition~\ref{prop:4}, the Telecom process is not max--associable.
\end{Example}
\begin{Rem}
It is important that the index $T$ in Proposition~\ref{prop:4} is the entire real line $\mathbb R$. 
Indeed, in both Example~\ref{example:3} and~\ref{example:4}, when the time index is restricted to the half--line $T = \mathbb R_+$ (or $T = \mathbb R_-$), the processes $\indt X$ satisfy condition~\eqref{eq:ft1ft2} and are therefore max--associable.
\end{Rem}

\section{Association of Classifications}\label{sec:classification}
We can also apply the association technique to relate various classification results for S$\alpha$S and $\alpha$--Fr\'echet processes. Note that, many classifications of S$\alpha$S ($\alpha$--Fr\'echet as well) processes are induced by suitable decompositions of the measure space $(S,\mu)$. The following theorem provides an essential tool for translating any classification results for S$\alpha$S to $\alpha$--Fr\'echet processes, and vice versa.
\begin{Thm}\label{thm:2}
Suppose an $\alpha$--Fr\'echet process $\indt X$ and an S$\alpha$S process $\indt Y$ are associated by two spectral representations $\indt {f\topp i}\subset\lap(S_i,\mu_i)$ for $i = 1,2$. That is,
\[
\indt X \eqd \left\{\Eintt_{S_i}f_t\topp i\d\mam\topp i\right\}_{t\in T} \qmand
\indt Y \eqd \left\{\int_{S_i}f_t\topp i\d\mam\topp i\right\}_{t\in T}\,,i = 1,2\,.
\]
Then, for any measurable subsets $A_i\subset S_i,i=1,2$, we have
\[
\bccbb{\Eintt_{A_1}f_t\topp 1d\mam\topp1}_{t\in T} \eqd \bccbb{\Eintt_{A_2}f_t\topp 2d\mam\topp2}_{t\in T}\Longleftrightarrow
\bccbb{\int_{A_1}f_t\topp 1d\mas\topp1}_{t\in T} \eqd \bccbb{\int_{A_2}f_t\topp 2d\mas\topp2}_{t\in T}\,.
\]
\end{Thm}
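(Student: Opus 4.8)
The plan is to reduce the assertion to Theorem~\ref{thm:1} by observing that restricting the domain of an extremal (resp.\ stable) integral amounts to multiplying the integrand by an indicator. For $i=1,2$ and $t\in T$, put $g_t\topp i\defe f_t\topp i\ind_{A_i}$; since $f_t\topp i\geq 0$ these are again nonnegative functions in $\lap(S_i,\mu_i)$, and
\[
\Eintt_{A_i}f_t\topp i\,d\mam\topp i=\Eintt_{S_i}g_t\topp i\,d\mam\topp i\qmand\int_{A_i}f_t\topp i\,d\mas\topp i=\int_{S_i}g_t\topp i\,d\mas\topp i\,.
\]
Hence the two equalities in law in the statement are precisely $\{\Eintt_{S_1}g_t\topp1\,d\mam\topp1\}_{t\in T}\eqd\{\Eintt_{S_2}g_t\topp2\,d\mam\topp2\}_{t\in T}$ and $\{\int_{S_1}g_t\topp1\,d\mas\topp1\}_{t\in T}\eqd\{\int_{S_2}g_t\topp2\,d\mas\topp2\}_{t\in T}$, respectively.

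Next I would rewrite each of these distributional identities as a family of norm identities using the finite-dimensional characterisations~\eqref{eq:3} and~\eqref{eq:4}. By~\eqref{eq:4}, the first (max-stable) identity holds if and only if, for every $n\in\mathbb N$, all $t_1,\dots,t_n\in T$ and all $b_1,\dots,b_n\geq 0$,
\[
\bnn{\bigvee_{j=1}^n b_j g_{t_j}\topp1}_{\lap(S_1,\mu_1)}=\bnn{\bigvee_{j=1}^n b_j g_{t_j}\topp2}_{\lap(S_2,\mu_2)}\,,
\]
where one passes from the ratios $f_{t_j}/a_j$ appearing in~\eqref{eq:4} to the reparametrisation $b_j=1/a_j\in(0,\infty)$ and then extends to $b_j=0$ by monotone convergence. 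Likewise, by~\eqref{eq:3}, the second (sum-stable) identity holds if and only if, for every such $n,t_1,\dots,t_n$ and all $a_1,\dots,a_n\in\mathbb R$,
\[
\bnn{\sum_{j=1}^n a_j g_{t_j}\topp1}_{\la(S_1,\mu_1)}=\bnn{\sum_{j=1}^n a_j g_{t_j}\topp2}_{\la(S_2,\mu_2)}\,.
\]

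Finally I would apply Theorem~\ref{thm:1} to the finite families $g_{t_1}\topp i,\dots,g_{t_n}\topp i\in\lap(S_i,\mu_i)$, $i=1,2$ (legitimate since these are arbitrary nonnegative functions and $0<\alpha<2$ is in force): it asserts exactly that the last displayed sum-norm identity, for all $a_j\in\mathbb R$, is equivalent to the preceding max-norm identity, for all $b_j\geq 0$. Letting $n$ and $t_1,\dots,t_n$ range over all finite subsets of $T$ then yields the equivalence of the two equalities in law, which is the theorem. I expect the argument to be essentially bookkeeping once Theorem~\ref{thm:1} is available; the only mildly delicate points are the identification $\Eintt_{A_i}(\,\cdot\,)=\Eintt_{S_i}\big((\,\cdot\,)\ind_{A_i}\big)$ (and its stable analogue) and the careful matching of the scalar ranges in~\eqref{eq:4}. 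It is also worth noting that the standing hypothesis that $\indt {f\topp1}$ and $\indt {f\topp2}$ are associated representations of a common pair of processes is not actually used here: it serves only to provide the context in which the conclusion is of interest.
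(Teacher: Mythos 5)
Your proposal is correct and follows essentially the same route as the paper, which simply invokes Theorem~\ref{thm:1} after restricting to the sets $A_i$ (your multiplication by $\ind_{A_i}$ is the same reduction), combined with the standard passage between equality of finite-dimensional distributions and equality of the norms in~\eqref{eq:3} and~\eqref{eq:4}. Your closing observation that the hypothesis of a common associated pair is not actually needed for the equivalence is also accurate.
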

\noindent The proof follows from Theorem~\ref{thm:1} by restricting the measures onto the sets $A_i, i = 1,2$.\smallskip

For an S$\alpha$S process $\indt X$ with spectral functions $\indt f\subset\las$, a decomposition typically takes the form
\equh\label{eq:XinX}
\indt X\eqd \bccbb{\sum_{j=1}^nX_t\topp j}_{t\in T}\,,
\eque
where $X_t\topp j = \int_{A\topp j}f_t(s)d\mas(s)$ for all $t\in T$ and $A\topp j,1\leq j\leq n$ are disjoint subsets of $S = \bigcup_{j=1}^nA\topp j$. The components $\indt{X\topp j}\,,1\leq j\leq n$ are independent S$\alpha$S processes. 
When $\indt X$ is max--associable, Theorem~\ref{thm:2} enables us to define the \textit{associated decomposition}, for the $\alpha$--Fr\'echet process $\indt Y$ associated with $\indt X$. Namely, we have
\[
\indt Y\eqd \bccbb{\bigvee_{j=1}^nY_t\topp j}_{t\in T}\,,
\]
where $Y\topp j_t = \Eintt_{A^{(j)}}|f_t(s)|d\mam(s)$ for all $t\in T$. Similarly, we can define the associated decomposition for S$\alpha$S process based on the decomposition of the associated $\alpha$--Fr\'echet processes.

We list below several known classification results for S$\alpha$S and $\alpha$--Fr\'echet processes. 
The decompositions below were obtained independently for sum-- and max--stable processes, without the use of association (see~\citet{hardin81isometries},~\cite{hardin82spectral},~\cite{rosinski95structure},~\cite{samorodnitsky05null} for S$\alpha$S processes and~\cite{dehaan84spectral} and~\cite{wang09structure} for $\alpha$--Fr\'echet processes). Theorem~\ref{thm:2} provides a simple way to relate these decompositions as well as to translate any (new) classification results from the sum--stable world to the max--stable world, and vice versa.

For the sake of simplicity, we present the results only for $\alpha$--Fr\'echet processes. In order to obtain the corresponding results for S$\alpha$S processes, it suffices to replace all the $\vee$ operators by $+$ and replace all the extremal integrals in form of $\ \Eintt_Sfd\mam$ by S$\alpha$S integrals $\int_Sfd\mas$ (then $f$ can be in $\la(S,\mu)$ instead of $\lap(S,\mu)$).
Consider any measurable stationary $\alpha$--Fr\'echet process $\indt Y$ with spectral representation~\eqref{rep:extremalRep} and assume that $T = \mathbb Z$ with $\lambda(dt)$ being the counting measure or $T = \mathbb R$ with $\lambda(dt)$ being the Lebesgue measure. We have: \medskip

\itemnumber i{\bf Conservative--dissipative decomposition:}
\[
\indt Y \eqd \{Y^C_t \vee Y^D_t\}_{t\in T}\,.
\]
Here $Y^C_t = \Eintt_C f_t(s)\mam(ds)$ and $Y^D_t = \Eintt_D f_t(s)\mam(ds)$ for all $t\in T$, with $C$ and $D$ defined by
\equh\label{decomp:CD}
C \defe \bccbb{s:\int_Tf_t(s)^\alpha \lambda(dt) = \infty}\qmand 
D \defe S\setminus C\,.
\eque
The sets $C$ and $D$ correspond to the {\it Hopf decomposition} $S = C\cup D$ of the non--singular flow associated with $\indt Y$ (see e.g.~\citet{rosinski95structure} and~\citet{wang09structure}). Thus, $\indt{Y^C}$ and $\indt{Y^D}$ are referred to as the {\it conservative} and {\it dissipative components} of $\indt Y$, respectively. Obviously, if $C$ ($D$ resp.) has zero measure, then $\indt {Y^C}$ ($\indt {Y^D}$ resp.) is trivial.\medskip\\
\itemnumber {ii}{\bf Positive--null decomposition:}
\equh\label{decomp:PN}
\indt Y = \{Y^{P}_t \vee Y^{N}_t\}_{t\in T}\,.
\eque
Here $Y^{P}_t = {\Eintt_{P}f_t(s)\mam(ds)}$ and ${Y^{N}_t} = \Eintt_{N}f_t(s)\mam(ds)\,,\forall t\in T$, with $P$ and $N$ defined as follows. Let $\calW$ be the class of functions $w:T\to\mathbb R_+$ such that $w$ is nondecreasing on $T\cap(-\infty,0]$, nonincreasing on $T\cap[0,\infty)$ and $\int_{T\cap(-\infty,0]}w(t)\lambda(dt) = \int_{T\cap[0,\infty)}w(t)\lambda(dt) = \infty\,.$ Here $\lambda$ is the counting measure if $T = \mathbb Z$ and the Lebesgue measure if $T = \mathbb R$. Then, $S$ can be decomposed into two parts, $S = {P}\cup {N}$, where
\eqnhn
{P} & \defe & \bccbb{s\in S:\int_T w(t)f_t(s)^\alpha\lambda(dt) = \infty\,,\mbox{ for all } w\in\calW}\label{decomp:nullPositive1}\qmand N \defe S\setminus P\,.
\eqnen
Note that $\mu(D\setminus N) = 0$ and $\mu(P\setminus C) = 0$. 
This implies that $\indt{Y^D}$ has no positive component, and one can combine~\eqref{decomp:CD} and~\eqref{decomp:PN} as follows:
\[
\indt Y \eqd \{Y_t^P \vee Y_t^{C,N}\vee Y_t^D\}_{t\in T}\,,
\]
where $\indt{Y^{C,N}} \equiv\{\ \Eintt_{C\cap N}f_t\mam\}_{t\in T}$ and $\indt{Y^P},\indt{Y^{C,N}}$ and $\indt{Y^D}$ are independent.
The components $Y^{P}$ and $Y^{\rm D}$ have relatively clear structures: $\indt{Y^D}$ is a mixed moving maxima (see~\citet{wang09structure}). Similarly, in the S$\alpha$S setting, the dissipative component $X^D = \indt{X^D}$ is a mixed moving average (see~\cite{rosinski95structure}). For a description of $\indt {X^P}$ ($\indt{Y^P}$ resp.), see e.g.~\citet{samorodnitsky05null}..
At the same time, the characterization for the component $\indt{Y^{C,N}}$ (or $\indt{X^{C,N}}$) is an open problem.

\begin{Rem}
We do not exhaust here all the structural classification results for sum--stable processes. ~\cite{pipiras02structure}, for example, provide a more detailed decomposition for S$\alpha$S processes with representation~\eqref{eq:XttR}. By using association, one can automatically obtain corresponding decompositions for the associated $\alpha$--Fr\'echet processes.
\end{Rem}

\section{Discussion}\label{sec:discussion}
Recently,~\citet{kabluchko08spectral} introduced a similar notion of association. We became aware of his result toward the end of our work. The two approaches are technically different. Kabluchko's approach utilizes \textit{spectral measures}, while ours is based on the structure of max--linear and linear isometries. These two approaches lead to the equivalent notions of association (see Lemma~2 in~\citet{kabluchko08spectral}). As a consequence, our Corollary~\ref{coro:associatedStationary} can also be obtained following his approach. 
On the other hand, our approach leads to a more direct proof of the following, which is Lemma 3 in~\citet{kabluchko08spectral}.
\begin{Lem}
Let $\indt X$ be an S$\alpha$S process and $\indt Y$ be an $\alpha$--Fr\'echet process. Suppose $\indt X$ and $\indt Y$ are associated by $\indt f\subset\lap(S,\mu)$. Then for any $\inftydots t\in T$, as $n\to\infty$, $X_{t_n}$ converges in probability to $X_t$, if and only if $Y_{t_n}$ converges in probability to $Y_t$. 
\end{Lem}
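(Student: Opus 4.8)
The plan is to reduce each of the two convergence statements to one and the same condition on the common spectral functions --- namely $f_{t_n}\to f_t$ in $\lap(S,\mu)$ --- so that the lemma follows from a chain of three equivalences.

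\emph{Stable side.} Since $\indt X$ is S$\alpha$S, the pair $(X_{t_n},X_t)$ is jointly S$\alpha$S, and by the linearity and the scale--coefficient formula for stable integrals recalled in Section~\ref{sec:prelim} the difference $X_{t_n}-X_t=\int_S(f_{t_n}-f_t)\,d\mas$ is S$\alpha$S with scale coefficient $\nn{f_{t_n}-f_t}_{\la(S,\mu)}$. As any S$\alpha$S variable is a deterministic multiple of a single fixed standard S$\alpha$S variable, $X_{t_n}-X_t\to 0$ in probability if and only if $\nn{f_{t_n}-f_t}_{\la(S,\mu)}\to0$, equivalently $\nn{f_{t_n}-f_t}_{\la(S,\mu)}^{1\wedge\alpha}\to0$; this is the classical characterization of convergence of stable integrals, see \citet{samorodnitsky94stable}.

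\emph{Fr\'echet side.} Here I would show that $Y_{t_n}\to Y_t$ in probability if and only if $\rho_{\mu,\alpha}(f_{t_n},f_t)=\int_S|f_{t_n}^\alpha-f_t^\alpha|\,d\mu\to0$. One cannot argue via the ``difference'' $Y_{t_n}-Y_t$, which need not be $\alpha$--Fr\'echet; instead I would sandwich $Y_{t_n}$ between the genuinely $\alpha$--Fr\'echet variables $\Eintt_S(f_{t_n}\wedge f_t)\,d\mam\le Y_{t_n}\le\Eintt_S(f_{t_n}\vee f_t)\,d\mam$, which come from the max--linearity (hence monotonicity) of the extremal integral, and also use $Y_{t_n}\vee Y_t=\Eintt_S(f_{t_n}\vee f_t)\,d\mam$. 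The scale coefficients of the two bounding variables differ from $\nn{f_t}_{\la(S,\mu)}$ by at most $\int_S(f_{t_n}^\alpha-f_t^\alpha)_+\,d\mu$ and $\int_S(f_t^\alpha-f_{t_n}^\alpha)_+\,d\mu$ respectively, each dominated by $\rho_{\mu,\alpha}(f_{t_n},f_t)$. Combining this with the elementary fact that $\alpha$--Fr\'echet variables lying below (resp.\ above) a fixed limit and having scale coefficients converging to that of the limit converge to it in distribution, hence in probability, a squeeze gives ``$\rho_{\mu,\alpha}(f_{t_n},f_t)\to0\ \Rightarrow\ Y_{t_n}\to Y_t$ in probability''. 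For the converse, $Y_{t_n}\to Y_t$ in probability forces the scale coefficients of both $Y_{t_n}$ and $Y_{t_n}\vee Y_t$ to converge to $\nn{f_t}_{\la(S,\mu)}$, and combining these two limits recovers $\int_S(f_{t_n}^\alpha-f_t^\alpha)_+\,d\mu\to0$ and $\int_S(f_t^\alpha-f_{t_n}^\alpha)_+\,d\mu\to0$, hence $\rho_{\mu,\alpha}(f_{t_n},f_t)\to0$. (This equivalence can alternatively be quoted from \citet{stoev06extremal}.)

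\emph{Closing.} By the remark in Section~\ref{sec:prelim}, on $\lap(S,\mu)$ the metrics $(g,h)\mapsto\nn{g-h}_{\la(S,\mu)}^{1\wedge\alpha}$ and $\rho_{\mu,\alpha}$ generate the same topology, so the stable--side condition $\nn{f_{t_n}-f_t}_{\la(S,\mu)}^{1\wedge\alpha}\to0$ is exactly the Fr\'echet--side condition $\rho_{\mu,\alpha}(f_{t_n},f_t)\to0$; chaining the three equivalences proves the lemma. The main obstacle is the Fr\'echet side: since $Y_{t_n}-Y_t$ is not $\alpha$--Fr\'echet there is no single scale coefficient to read off, so one must pass to $\alpha$--Fr\'echet variables through the $\wedge/\vee$ sandwich (or invoke the continuity of the extremal--integral map from $(\lap(S,\mu),\rho_{\mu,\alpha})$ to $L^0$); the remaining steps are routine.
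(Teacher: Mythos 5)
Your proposal is correct and follows essentially the same route as the paper: both reduce $X_{t_n}\stackrel{p}{\to}X_t$ and $Y_{t_n}\stackrel{p}{\to}Y_t$ to the single condition that $f_{t_n}\to f_t$ in $\lap(S,\mu)$, using that $\nn{f_{t_n}-f_t}_{\la(S,\mu)}^{1\wedge\alpha}\to0$ and $\rho_{\mu,\alpha}(f_{t_n},f_t)\to0$ are equivalent there. The only difference is that the paper simply cites Proposition~3.5.1 of \citet{samorodnitsky94stable} and Theorem~2.1 and Lemma~2.3 of \citet{stoev06extremal} for the two equivalences, whereas you supply a (correct) self-contained $\wedge/\vee$ sandwich argument for the Fr\'echet side.
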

\begin{proof}
By Proposition 3.5.1 in~\citet{samorodnitsky94stable}, $X_{t_n}\stackrel{p}{\to} X_t$ as $n\to\infty$, if and only if $\left\|f_{t_n}-f_t\right\|_{\lap(S,\mu)}^\alpha \to 0$ as $n\to\infty$. This is equivalent to, by Theorem 2.1 and Lemma 2.3 in~\cite{stoev06extremal}, $Y_{t_n} \stackrel{p}{\to} Y_t$ as $n\to\infty$.
\end{proof}
\citet{kabluchko08spectral} also proved (Theorem 9 therein) that an $\alpha$--Fr\'echet process is mixing (ergodic resp.) if and only if the associated S$\alpha$S process is mixing (ergodic resp.). The proofs of these results, however, are not
simple consequences of the notion of association. 
By association one can easily obtain new classes of $\alpha$--Fr\'echet processes. 
The probabilistic properties of the new processes (e.g.\ the associated $\alpha$--Fr\'echet processes in examples in Section~\ref{sec:association}), however, do not automatically follow `by association' and are yet to be investigated.
\section*{Acknowledgments}
The authors were partially supported by NSF grant DMS--0806094 at the University of Michigan.

\appendix
\section{Proofs of Auxiliary Results}\label{sec:proofs}
We first need the following lemma.
\begin{Lem}\label{lem:equalSigma}
If $F \subset\laps$, then 

\itemnumber {i} $\rho(F) = \rho(\cpsspan(F)) = \rho(\cmsspan(F))$, and

\itemnumber {ii} for any $f\topp 1\in\cpsspan(F)$ and $f\topp2\in\cmsspan(F)$, $f\topp1/f\topp2\in\rho(F)$.
\end{Lem}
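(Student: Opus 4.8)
The plan is to prove part (i) first, and then derive (ii) from it. For part (i), the key observation is that the ratio $\sigma$-field only grows when we enlarge the generating family, so it suffices to show that every ratio $g_1/g_2$ with $g_1,g_2\in\cpsspan(F)$ (respectively $\cmsspan(F)$) is $\rho(F)$-measurable; the reverse inclusions $\rho(F)\subset\rho(\cpsspan(F))$ and $\rho(F)\subset\rho(\cmsspan(F))$ are immediate since $F\subset\cpsspan(F)$ and $F\subset\cmsspan(F)$. So the real content is: ratios of (limits of) positive--linear or max--linear combinations of functions in $F$ stay in $\rho(F)$.

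First I would handle the algebraic case, i.e.\ finite combinations. If $g_1=\sum_{i=1}^m a_i f_i$ and $g_2 = \sum_{j=1}^k b_j f_j'$ with $a_i,b_j>0$ and $f_i,f_j'\in F$, then on the set $\{g_2\neq 0\}$ we can write $g_1/g_2$ as a rational function of the finitely many ratios $f_i/f_{i_0}$ and $f_j'/f_{i_0}$ for a fixed reference function $f_{i_0}$ (dividing numerator and denominator by $f_{i_0}$), and these ratios all generate sub-$\sigma$-fields of $\rho(F)$; handling the null sets and the extended-real-valued conventions (values in $[-\infty,\infty]$) requires a little care but is routine. The same works for finite max--linear combinations $\bigvee_i a_i f_i$, since $\bigvee$ is a Borel function of its arguments and again $\bigvee_i a_i f_i = f_{i_0}\cdot\bigvee_i a_i (f_i/f_{i_0})$ on $\{f_{i_0}>0\}$, with analogous bookkeeping on the complement. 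Then I would pass to the closure: if $g\topp n\to g$ in $\mrho$ (equivalently $\nn{g\topp n-g}_{\la}^{1\wedge\alpha}\to0$), then along a subsequence $g\topp n\to g$ $\mu$-a.e., and likewise for the denominators; hence the ratio $g_1/g_2$ is a.e.\ a limit of the measurable ratios $g_1\topp n/g_2\topp n$, so it is $\rho(F)$-measurable up to a $\mu$-null modification. This establishes $\rho(\cpsspan(F))\subset\rho(F)$ and $\rho(\cmsspan(F))\subset\rho(F)$, giving (i). For part (ii), note that $f\topp1/f\topp2$ on $\{f\topp2\neq0\}$ is a ratio of a function in $\cpsspan(F)$ and one in $\cmsspan(F)$, and by (i) both of these lie in the common $\sigma$-field $\rho(F)$; writing $f\topp1/f\topp2 = (f\topp1/h)\cdot(h/f\topp2)$ for a fixed $h\in F$ (or directly, since both numerator and denominator are $\rho(F)$-measurable and the quotient map on $[-\infty,\infty]^2$ is Borel) shows $f\topp1/f\topp2\in\rho(F)$.

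The step I expect to be the main obstacle is the careful treatment of the extended-real-valued ratios and the $\mu$-null sets: the ratios are allowed to take values in $[-\infty,\infty]$, zeros in denominators and numerators must be dealt with consistently, and the a.e.\ convergence argument only gives measurability after discarding a null set, which is exactly why the statement is about $\sigma$-fields (these are naturally taken modulo $\mu$-null sets). None of this is deep, but it is where the proof has to be written precisely rather than waved through.
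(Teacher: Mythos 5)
Your overall strategy is the same as the paper's: the content of (i) is the two inclusions $\rho(\cpsspan(F))\subset\rho(F)$ and $\rho(\cmsspan(F))\subset\rho(F)$, proved first for finite combinations and then pushed to the closures by extracting $\mu$-a.e.\ convergent subsequences (the paper does exactly this last step, with the same silent treatment of $\{g=0\}$ in the limit). Where you differ is the mechanism for the finite-combination step and for (ii): the paper never divides by anything, but instead writes the sublevel sets of the composite ratios as countable Boolean combinations, over rationals, of sublevel sets of the elementary ratios $a_if_i/(b_jg_j)$ (e.g.\ $\{(a_1f_1+a_2f_2)/(b_1g_1)\le x\}=\bigcup_{q\in\mathbb Q}(\{a_1f_1/(b_1g_1)\le q\}\cap\{a_2f_2/(b_1g_1)\le x-q\})$, and for (ii) the direct identity $\{\sum_ia_if_i/\bigvee_jb_jg_j<x\}=\bigcup_j\{\sum_ia_i(f_i/g_j)<xb_j\}$), which is well defined on all of $S$ because it only ever quantifies over the generators of $\rho(F)$. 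Your reduction of (ii) to (i) via $f\topp1/f\topp2=(f\topp1/h)(h/f\topp2)$ is a legitimate alternative and arguably cleaner, since $f\topp1/h$ and $h/f\topp2$ are generators of $\rho(\cpsspan(F))$ and $\rho(\cmsspan(F))$ respectively, both equal to $\rho(F)$ by (i).

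Two points need repair, though. First, dividing numerator and denominator by a fixed reference function $f_{i_0}$ is not a null-set issue: $\{f_{i_0}=0\}$ can have positive measure, and there the manipulation produces $\infty/\infty$ (take $f_1,f_2$ with non-nested supports and compute $(f_1+f_2)/(f_1+f_2)$ with reference $f_1$ on $\supp(f_2)\setminus\supp(f_1)$), so the identity between the original ratio and your rational function of elementary ratios genuinely fails on a non-negligible set. To rescue your version you must partition $S$ according to which of the finitely many functions involved is the first one that is nonzero --- each piece lies in $\rho(F)$ since $\supp(f)=\{f/f=1\}$ under the convention $0/0=0$ --- and switch reference function on each piece; or simply adopt the paper's set-level decomposition, which is designed precisely to avoid this. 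The same caveat applies to the choice of $h$ in your proof of (ii): it must have full support relative to $f\topp1$ and $f\topp2$ (or be chosen piecewise) for the factorization to hold off a null set. Second, your parenthetical fallback for (ii), ``since both numerator and denominator are $\rho(F)$-measurable,'' is false in general: elements of $\cpsspan(F)$ and $\cmsspan(F)$ need not be $\rho(F)$-measurable --- only their ratios are (for $F$ a single function, $\rho(F)$ is essentially the trivial $\sigma$-field on $\supp(f)$). Keep the product route and drop that remark.
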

\begin{proof}\itemnumber{i}
First, for any $f_i,g_i\in F,a_i\geq 0,b_i\geq 0,i\in\mathbb N$, we have
\[
\bccbb{\frac{\bigvee_{i\in\mathbb N}a_if_i}{\bigvee_{j\in\mathbb N}b_jg_j}\leq x} = \bigcap_{i\in\mathbb N}\bccbb{\frac{a_if_i}{\bigvee_{j\in\mathbb N} b_jg_j}\leq x} = 
\bigcap_{i\in\mathbb N}\bigcap_{k\in \mathbb N}\bigcup_{j\in\mathbb N}\bccbb{\frac{a_if_i}{b_jg_j}< x+\frac1k} \,,
\]
hence $\rho(\cmsspan(F)) \subset\rho(\cpsspan(F))$. 

To show $\rho(\cpsspan(F))\subset\rho(\cmsspan(F))$, we shall first prove that $\rho(\psspan(F)) \subset\rho(\cmsspan(F))$, where $\psspan(F)$ involves only finite positive linear combinations. For all $f_1,f_2,g_1\in F, a_1,b_1,b_2\geq 0$, we have
\[
\bccbb{\frac{a_1f_1+a_2f_2}{b_1g_1}\leq x} = \bigcup_{q_j\in\mathbb Q}\Big(\bccbb{\frac{a_1f_1}{b_1g_1}\leq q_j} \cap \bccbb{\frac{a_2f_2}{b_1g_1}\leq x-q_j}\Big)\,,
\]
This shows that $(a_1f_1+a_2f_2)/b_1g_1$ is $\rho(\cmsspan(F))$ measurable. By using the fact that $F$ contains only nonnegative functions and since $\left\{\frac{b_1g_1}{a_1f_1+a_2f_2}\leq x\right\} = \left\{\frac{a_1f_1+a_2f_2}{b_1g_1}\geq \frac1x\right\}$, for $x>0$, we similarly obtain that $(a_1f_1+a_2f_2)/(b_1g_1+b_2g_2)$ is $\rho(\cmsspan(F))$ measurable. Similarly arguments can be used to show that $(\sumin a_if_i)/(\sumin b_ig_i)$ is $\rho(\cmsspan(F))$ measurable for all $a_i,b_i\geq 0, f_i, g_i\in F, 1\leq i\leq n$. 

We have thus shown that $\rho(\psspan(F))\subset\rho(\cmsspan(F))$. If now $f,g\in\cpsspan(F)$, then there exist two sequences $f_n,g_n\in\psspan(F)$, such that $f_n\to f$ and $g_n\to g$ a.e.. Thus, $h_n\defe f_n/g_n \to h\defe f/g$ as $n\to\infty$, a.e.. Since $h_n$ are $\rho(\psspan(F))$ measurable for all $n\in\mathbb N$, so is $h$.
Hence $\rho(\cpsspan(F)) = \rho(\psspan(F))\subset\rho(\cmsspan(F))$.

\itemnumber {ii} By the previous argument, it is enough to focus on finite linear and max--linear combinations. Suppose $f\topp1 = \sumin a_if_i$ and $f\topp2 = \bveejp b_jg_j$ for some $f_i,g_j\in F, a_i,b_j\geq 0, 1\leq i\leq n, 1\leq j\leq p$. Then, for all $x>0$,
\[
\bccbb{\frac{\sumin a_if_i}{\bveejp b_jg_j}<x} = \bigcup_{j=1}^p\bccbb{\sumin a_i\frac{f_i}{g_j}<x{b_j}} \in\rho(F)\,.
\]
It follows that $f\topp1/f\topp2\in\rho(F)$.
\end{proof}
\begin{proof}[Proof of Proposition~\ref{prop:extendedRatio1}]
First we show $\calR_{e,+}(\filF_\vee)\supset \calR_{e,+}(\filF_+)$, where $\filF_\vee$ and $\filF_+$ are defined in~\eqref{eq:filF}. By~\eqref{rep:EPRS}, it suffices to show that, for any $r_2\in\rho(\filF_+), f\topp2\in\filF_+$, there exist $r_1\in\rho(\filF_\vee)$ and $f\topp1\in\filF_\vee$, such that 
\equh\label{eq:r1f1}
r_1f\topp1 = r_2f\topp2\,.
\eque
To obtain~\eqref{eq:r1f1}, we need the concept of {\it full support}. We say a function $g$ has full support in $F$ (an arbitrary collection of functions defined on $(S,\mu)$), if $g\in F$ and for all $f\in F$, $\mu(\supp(g)\setminus\supp(f)) = 0$. Here $\supp(f) \defe \{s\in S:f(s)\neq 0\}$. By Lemma 3.2 in~\citet{wang09structure}, there exists function $f\topp1\in\filF_\vee$, which has full support in $\filF_\vee$.  One can show that this function has also full support in ${\cal F}_+$. 
Indeed, let $g\in\filF_+$ be arbitrary. Then, there exist $g_n = \sum_{i=1}^{k_n}a_{ni}g_{ni}, a_{ni}\geq 0$ and $g_{ni}\in F\subset\filF_\vee$ such that $g_n\stackrel{\mu}{\longrightarrow}g$ as $n\to\infty$. Note that $\mu(\supp(g_n)\setminus\supp(f)) = 0$ for all $n$. Thus, for all $\epsilon>0$, we have $\mu(|g_n-g|>\epsilon) \geq \mu(\{|g|>\epsilon\}\setminus \supp(f))$. Since $\mu(|g_n-g|>\epsilon)\to0$ as $n\to\infty$, it follows that $\mu(\{|g|>\epsilon\}\setminus\supp(f)) = 0$ for all $\epsilon>0$, i.e., $\mu(\supp(g)\setminus\supp(f)) = 0$.  We have thus shown that $f$ has full support in ${\cal F}_+$.

Now, set $r_1\defe r_2\left(f\topp2/f\topp1\right)$, we have~\eqref{eq:r1f1}. (Note that $f\topp2 = 0\,,\mu\ae$ on $S\setminus\supp(f\topp1)$. By setting $0/0 = 0$, $f\topp2/f\topp1$ is well defined.) Lemma~\ref{lem:equalSigma} (ii) implies that $f\topp2/f\topp1\in\rho(F)$, whence $r_1\in\rho(F) = \rho(\filF_+)$. We have thus shown $\calR_{e,+}(\filF_\vee)\supset\calR_{e,+}(\filF_+)$. In a similar way one can show $\calR_{e,+}(\filF_\vee)\subset\calR_{e,\vee}(\filF_+)$.
\end{proof}
\begin{proof}[Proof of Proposition~\ref{prop:associable}]
First, suppose~\eqref{cond:associable2} does not hold but~\eqref{eq:thm1sum} holds. Then, without loss of generality, we can assume that there exists $S_0\topp1\subset S_1$ such that $f_1\topp1(s)>0, f_2\topp2(s)<0$ for all $s\in S_0\topp1$ and $\mu(S_0\topp1)>0$. It follows from~\eqref{eq:thm1sum} that there exists a linear isometry $U$ such that, by Theorem~\ref{thm:factor}, $Uf_i\topp1 = f_i\topp2 = \widebar T(r_i)U(f)$, with certain $f$ and $r_i = f_i\topp1/f$, for $i = 1,2$. 
In particular, $f$ can be taken with full support.
Note that $\sign(r_1)\neq\sign(r_2)$ on $S_0\topp1$. It follows that $f_1\topp2$ and $f_2\topp2$ have different signs on a set of positive measure (indeed, this set is the image of the $S_0\topp1$ under the regular set isomorphism $T$). This contradicts the fact that $f_1\topp2$ and $f_2\topp2$ are both nonnegative on $S_2$.

On the other hand, suppose~\eqref{cond:associable2} is true. Define $Uf_i\topp1 \defe |f_i\topp1|$. It follows from~\eqref{cond:associable2} that $U$ can be extended to a positive--linear isometry from $\la(S_1,\mu_1)$ to $\lap(S_2,\mu_2)$, which implies~\eqref{eq:thm1sum}.
\end{proof}
\bibliographystyle{elsarticle-harv}


\begin{thebibliography}{19}
\expandafter\ifx\csname natexlab\endcsname\relax\def\natexlab#1{#1}\fi
\expandafter\ifx\csname url\endcsname\relax
  \def\url#1{\texttt{#1}}\fi
\expandafter\ifx\csname urlprefix\endcsname\relax\def\urlprefix{URL }\fi

\bibitem[{Aaronson(1997)}]{aaronson97introduction}
Aaronson, J., 1997. An Introduction to Infinite Ergodic Theory. American
  Mathematical Society.

\bibitem[{Burnecki et~al.(1998)Burnecki, Rosi{\'n}ski, and
  Weron}]{burnecki98spectral}
Burnecki, K., Rosi{\'n}ski, J., Weron, A., 1998. Spectral representation and
  structure of stable self-similar processes. In: Stochastic processes and
  related topics. Trends Math. Birkh\"auser Boston, Boston, MA, pp. 1--14.

\bibitem[{de~Haan(1984)}]{dehaan84spectral}
de~Haan, L., 1984. A spectral representation for max-stable processes. Ann.
  Probab. 12~(4), 1194--1204.

\bibitem[{Hardin(1981)}]{hardin81isometries}
Hardin, Jr., C.~D., 1981. Isometries on subspaces of {$L\sp{p}$}. Indiana Univ.
  Math. J. 30~(3), 449--465.

\bibitem[{Hardin(1982)}]{hardin82spectral}
Hardin, Jr., C.~D., 1982. On the spectral representation of symmetric stable
  processes. J. Multivariate Anal. 12~(3), 385--401.

\bibitem[{Kabluchko(2008)}]{kabluchko08spectral}
Kabluchko, Z., 2008. Spectral representations of sum-- and max--stable
  processes, preprint available at
  \url{http://www.stochastik.math.uni-goettingen.de/preprints/paper10\_preprin%
t.pdf}.

\bibitem[{Krengel(1985)}]{krengel85ergodic}
Krengel, U., 1985. Ergodic Theorems. de Gruyter, Berlin.

\bibitem[{Lamperti(1958)}]{lamperti58isometries}
Lamperti, J., 1958. On the isometries of certain function-spaces. Pacific J.
  Math. 8, 459--466.

\bibitem[{Levy and Taqqu(2000)}]{levy00renewal}
Levy, J.~B., Taqqu, M.~S., 2000. Renewal reward processes with heavy-tailed
  inter-renewal times and heavy-tailed rewards. Bernoulli 6~(1), 23--44.

\bibitem[{Pipiras and Taqqu(2002)}]{pipiras02structure}
Pipiras, V., Taqqu, M.~S., 2002. The structure of self--similar stable mixed
  moving averages. Ann. Probab. 30.

\bibitem[{Pipiras et~al.(2004)Pipiras, Taqqu, and Levy}]{pipiras04slow}
Pipiras, V., Taqqu, M.~S., Levy, J.~B., 2004. Slow, fast and arbitrary growth
  conditions for renewal-reward processes when both the renewals and the
  rewards are heavy-tailed. Bernoulli 10~(1), 121--163.

\bibitem[{Rosi\'nski(1995)}]{rosinski95structure}
Rosi\'nski, J., 1995. On the structure of stationary stable processes. Ann.
  Probab. 23~(3), 1163--1187.

\bibitem[{Rosi\'nski(2000)}]{rosinski00decomposition}
Rosi\'nski, J., 2000. Decompostion of stationary $\alpha$--stable random
  fields. Ann. Probab. 28, 1797--1813.

\bibitem[{Samorodnitsky(2005)}]{samorodnitsky05null}
Samorodnitsky, G., 2005. Null flows, positive flows and the structure of
  stationary symmetric stable processes. Ann. Probab. 33, 1782--1803.

\bibitem[{Samorodnitsky and Taqqu(1994)}]{samorodnitsky94stable}
Samorodnitsky, G., Taqqu, M.~S., 1994. Stable Non-Gaussian Random Processes.
  Chapman \& Hall.

\bibitem[{Stoev(2008)}]{stoev08ergodicity}
Stoev, S.~A., 2008. On the ergodicity and mixing of max-stable processes.
  Stochastic Process. Appl. 118~(9), 1679--1705.

\bibitem[{Stoev and Taqqu(2006)}]{stoev06extremal}
Stoev, S.~A., Taqqu, M.~S., 2006. Extremal stochastic integrals: a parallel
  between max-stable and alpha-stable processes. Extremes 8~(3), 237--266.

\bibitem[{Surgailis et~al.(1998)Surgailis, Rosi\'nski, Mandrekar, and
  Cambanis}]{surgailis98mixing}
Surgailis, D., Rosi\'nski, J., Mandrekar, V., Cambanis, S., 1998. On the mixing
  structure of stationary increment and self--similar {S}$\alpha${S} processes,
  preprint.

\bibitem[{Wang and Stoev(2009)}]{wang09structure}
Wang, Y., Stoev, S.~A., 2009. On the structure and representations of
  max--stable processes, available at Arxiv.org
  \url{http://arxiv.org/abs/0903.3594}.

\end{thebibliography}

\end{document}